\newtheorem{theorem}{Theorem}[section]
\newtheorem{thm}[theorem]{Theorem}
\newtheorem{prop}[theorem]{Proposition}
\newtheorem{lem}[theorem]{Lemma}
\newtheorem{remark}[theorem]{Remark}
\makeatletter \@addtoreset{equation}{section}
\newcommand{\qbinom}[2]{\genfrac{[}{]}{0pt}{}{#1}{#2}}
\def\CC{\mathbb{C}}
\def\CT{\mathop{\mathrm{CT}}}
\def\LC{\mathop{\mathrm{LC}}}
\begin{document}

\title{A Laurent series proof of the Habsieger-Kadell $q$-Morris identity}
\author{Guoce Xin \\ 
\small Department of mathematics\\[-0.8ex]
\small Capital Normal University\\[-0.8ex]
\small Beijing 100048, P.R. China\\
\small\tt guoce.xin@gmail.com\\
\and
Yue Zhou\\ 
\small School of Mathematics and Statistics\\[-0.8ex]
\small Central South University\\[-0.8ex]
\small Changsha 410075, P.R. China\\
\small\tt nkzhouyue@gmail.com
}

\date{March 24, 2014}
\maketitle

\begin{abstract}
We give a Laurent series proof of the Habsieger-Kadell $q$-Morris identity, which is a common generalization of the
$q$-Morris identity and the Aomoto constant term identity. The proof allows us to extend the theorem for some additional
parameter cases.

\bigskip\noindent \textbf{Keywords:} Laurent series; constant term identities; $q$-Morris identity; $q$-Dyson identity; Selberg integral
\end{abstract}

\section{Introduction}
This paper is closely related to the well-known
Dyson's ex-conjecture. The conjecture was made by Freeman Dyson in 1962 when studying statistical theory of energy
levels of complex systems \cite{dyson1962}.
\begin{thm}\label{thm-Dyson}
For nonnegative integers $a_0,\ldots,a_n$,
\begin{align}
\CT_{x}\prod_{0\leqslant i<j\leqslant n} \Big(1-\frac{x_i}{x_j}\Big)^{a_i} \Big(1-\frac{x_j}{x_i}\Big)^{a_j}
= \frac{(a_0+\cdots+a_n)!}{a_0!a_1!\cdots a_n!},
\end{align}
where $\CT_{\mathbf{x}}f(\mathbf{x})$ means to take the constant term in the Laurent expansion of $f(\mathbf{x})$ in the powers of $x_0,x_1,\ldots,x_n$.
\end{thm}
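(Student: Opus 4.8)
The plan is to reprove Dyson's identity by I.~J.~Good's short induction, whose only ingredient beyond bookkeeping is a single partial-fraction identity. Write
\[
D=D(a_0,\dots,a_n):=\prod_{0\le i<j\le n}\Big(1-\frac{x_i}{x_j}\Big)^{a_i}\Big(1-\frac{x_j}{x_i}\Big)^{a_j},
\]
let $F(a_0,\dots,a_n)=\CT_{\mathbf x}D$ denote the left-hand side, and let $M(a_0,\dots,a_n)=(a_0+\dots+a_n)!/(a_0!\cdots a_n!)$ be the conjectured value. Both $D$ (hence $F$) and $M$ are invariant under simultaneously permuting the variables $x_i$ and the parameters $a_i$, so I will run a double induction: an outer one on the number of variables and an inner one on $N=a_0+\dots+a_n$.

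For the base cases: when $n=0$ the identity reads $\CT_{x_0}1=1$; and if some $a_i$ vanishes, say $a_0=0$ after a permutation, then the factors coming from the pairs $(0,j)$ reduce to $\prod_{j=1}^{n}(1-x_j/x_0)^{a_j}$, every monomial of which carries a non-positive power of $x_0$, so extracting the constant term in $x_0$ first leaves only the monomial $1$ and $F(0,a_1,\dots,a_n)=F(a_1,\dots,a_n)$; this equals $M(a_1,\dots,a_n)=M(0,a_1,\dots,a_n)$ by the outer induction.

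For the inductive step I may thus assume $a_i\ge 1$ for all $i$. Begin with the Lagrange-interpolation identity $\sum_{k=0}^{n}\prod_{j\ne k}\frac{x_j}{x_j-x_k}=1$, obtained by interpolating the constant polynomial $1$ through the nodes $x_0,\dots,x_n$ and setting the argument to $0$, and multiply it by $D$. For fixed $k$, every occurrence of the exponent $a_k$ in $D$ lies in the subproduct $\prod_{j\ne k}(1-x_k/x_j)^{a_k}$, so $D=D(\dots,a_k-1,\dots)\cdot\prod_{j\ne k}(1-x_k/x_j)$; since $(1-x_k/x_j)\cdot\frac{x_j}{x_j-x_k}=1$, the $k$-th summand collapses exactly to $D(\dots,a_k-1,\dots)$. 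Because each $a_k-1\ge 0$ these are genuine Laurent polynomials, so the equality $D=\sum_{k=0}^{n}D(\dots,a_k-1,\dots)$ holds at the level of Laurent polynomials and $\CT_{\mathbf x}$ applies termwise to give the recursion
\[
F(a_0,\dots,a_n)=\sum_{k=0}^{n}F(a_0,\dots,a_k-1,\dots,a_n).
\]
A one-line computation shows that $M$ obeys the same recursion, and the inner induction then yields $F=M$.

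The step I expect to demand the most care is not the interpolation trick but the surrounding bookkeeping: one must check that after multiplying by the partial-fraction identity the summands are honest Laurent polynomials — which holds precisely because $a_k\ge 1$ — so that $\CT_{\mathbf x}$ is additive on them, and one must dispatch the $a_i=0$ cases by a clean reduction to fewer variables before the recursion can be invoked, which is exactly why those cases are peeled off first. An alternative route would be an iterated Laurent-series / residue argument in the style used below for the $q$-deformation, but for the classical identity Good's proof is by far the most economical.
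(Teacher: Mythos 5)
Your proof is a correct rendition of I.~J.~Good's classical argument. Note, however, that the paper itself does not prove Theorem~\ref{thm-Dyson}: it is stated purely as background, with the reader referred to the literature (indeed Good's note \cite{good1} is one of the cited sources), so there is no ``paper's own proof'' to compare against. The genuine content of the paper begins with the $q$-deformed statements, for which Good's interpolation trick breaks down and the authors instead use the iterated Laurent series machinery of \cite{gess-xin2006} together with a polynomiality-in-$q^a$ characterization.

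On the merits of what you wrote: the pieces all fit. The Lagrange identity $\sum_{k}\prod_{j\ne k}\frac{x_j}{x_j-x_k}=1$ is the evaluation at $0$ of the interpolation of the constant $1$; the factorization $D=D(\dots,a_k-1,\dots)\prod_{j\ne k}(1-x_k/x_j)$ is exact because every factor of $D$ carrying the exponent $a_k$ is of the form $(1-x_k/x_j)^{a_k}$ with $j\ne k$; and $(1-x_k/x_j)\cdot\frac{x_j}{x_j-x_k}=1$ collapses the $k$-th summand. You are right to insist on $a_k\ge 1$ before invoking the recursion (so that each summand is a genuine Laurent polynomial and $\CT_{\mathbf x}$ distributes), and right to peel off the $a_i=0$ case by dropping a variable and appealing to the outer induction on $n$. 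The multinomial recursion $M=\sum_k M(\dots,a_k-1,\dots)$ is immediate from $\sum_k a_k = N$. The double induction (outer on $n$, inner on $N=\sum a_i$ with base case ``some $a_i=0$'') is well-founded. This is as economical a proof of the classical Dyson identity as one can give; it simply plays no role in the rest of the paper, whose techniques are designed to survive $q$-deformation where this one does not.
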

Dyson's ex-conjecture has been proved by many authors using different methods. See, e.g.,
\cite{Gessel-Lv-Xin-Zhou2008,good1,gunson,wilson,zeil1982}. Many variations of Dyson's ex-conjecture have been found,
such as
the famous Macdonald constant term conjectures \cite{Cherednik,macdonald}. Some of them are still not solved. See, e.g., \cite{baker-forr,CLWZ}.

The $q$-analogous of the Dyson conjecture was made by Andrews \cite{andrews1975} in 1975.
\begin{thm}[Zeilberger-Bressoud]\label{thm-dyson}
For nonnegative integers $a_0,a_1,\dots,a_n$,
\begin{align*}
\CT_{\mathbf{x}}\,\prod_{0\leqslant i<j\leqslant n}
\left(\frac{x_i}{x_j}\right)_{\!\!a_i}
\left(\frac{x_j}{x_i}q\right)_{\!\!a_j}
=\frac{(q)_{a_0+a_1+\cdots+a_n}}{(q)_{a_0}(q)_{a_1}\cdots(q)_{a_n}},
\end{align*}
where $(z)_{m}:=\frac{(z;q)_{\infty}}{(zq^m;q)_{\infty}}=(1-z)(1-zq)\cdots (1-z q^{m-1})$.
\end{thm}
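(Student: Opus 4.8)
The plan is to prove the theorem by the iterated Laurent series (constant term) method, following the Laurent series proof of Gessel and Xin. Set $F_n(\mathbf{x})=\prod_{0\le i<j\le n}(x_i/x_j)_{a_i}(qx_j/x_i)_{a_j}$, which is a genuine Laurent polynomial, and write $C_n(a_0,\dots,a_n)=\CT_{\mathbf{x}}F_n$. We induct on $n$. The case $n=0$ is trivial; the case $n=1$ is a form of the $q$-Vandermonde (Chu--Vandermonde) summation, since expanding the two factors of $F_1$ by the finite $q$-binomial theorem the terms free of $x_1$ are exactly $q^{m^2}\qbinom{a_0}{m}\qbinom{a_1}{m}$ and $\CT_{x_0}$ fixes them, whence $C_1(a_0,a_1)=\sum_{m\ge0}q^{m^2}\qbinom{a_0}{m}\qbinom{a_1}{m}=\qbinom{a_0+a_1}{a_0}=(q)_{a_0+a_1}/\bigl((q)_{a_0}(q)_{a_1}\bigr)$. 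For the inductive step we peel off $x_n$: only the pairs $(i,n)$ produce factors containing $x_n$, so $F_n=F_{n-1}(x_0,\dots,x_{n-1})\prod_{i=0}^{n-1}(x_i/x_n)_{a_i}(qx_n/x_i)_{a_n}$ and hence $C_n=\CT_{x_0,\dots,x_{n-1}}\bigl[F_{n-1}\cdot\CT_{x_n}\prod_{i=0}^{n-1}(x_i/x_n)_{a_i}(qx_n/x_i)_{a_n}\bigr]$.

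Fix $a_0,\dots,a_{n-1}$ and put $s=a_0+\cdots+a_{n-1}$. The first step is to show that $C_n$, as a function of $t:=q^{a_n}$, is a polynomial of degree at most $s$. Expand $(qx_n/x_i)_{a_n}=\sum_{m\ge0}(-1)^mq^{\binom{m+1}{2}}\qbinom{a_n}{m}(x_n/x_i)^m$, where $\qbinom{a_n}{m}$ is a polynomial in $t$ of degree $m$, and expand $\prod_i(x_i/x_n)_{a_i}$ as a polynomial of degree $s$ in $x_n^{-1}$ whose coefficients do not involve $a_n$. Taking the constant term in $x_n$ pairs a power $x_n^{p}$ from the first product with $x_n^{-p}$ from the second, so $0\le p\le s$ and the $p$-th contribution has $t$-degree at most $p\le s$; applying $\CT_{x_0,\dots,x_{n-1}}$ against $F_{n-1}$ (which is free of $a_n$) preserves the bound. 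On the other hand $(q)_{a_0+\cdots+a_n}/(q)_{a_n}=(1-q^{a_n+1})\cdots(1-q^{a_n+s})=\prod_{j=1}^{s}(1-q^{j}t)$, so the right-hand side of the asserted identity equals $\bigl(\prod_{j=1}^{s}(1-q^{j}t)\bigr)/\bigl((q)_{a_0}\cdots(q)_{a_{n-1}}\bigr)$, again a polynomial in $t$ of degree at most $s$. It therefore suffices to verify the identity at $s+1$ values of $t$. One of these is $t=1$, i.e.\ $a_n=0$: there $(qx_n/x_i)_0=1$ while $\prod_i(x_i/x_n)_{a_i}$ has $x_n$-constant term $1$, so $C_n(a_0,\dots,a_{n-1},0)=C_{n-1}(a_0,\dots,a_{n-1})$, which by the inductive hypothesis is the right-hand side at $a_n=0$. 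The remaining $s$ values are $t=q^{-1},\dots,q^{-s}$, where the right-hand side obviously vanishes.

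Thus the whole proof reduces to showing that the polynomial $C_n(t)$ also vanishes at $t=q^{-j}$ for $1\le j\le s$. The idea is to read $C_n(q^{-j})$ as the constant term of $F_{n-1}\prod_i(x_i/x_n)_{a_i}(qx_n/x_i)_{-j}$, where each factor $(qx_n/x_i)_{-j}=1/(q^{1-j}x_n/x_i;q)_j$ is expanded as a power series in $x_n$ -- this is precisely the analytic continuation $q^{a_n}\mapsto q^{-j}$ of the coefficients that occur in $C_n(t)$. For the pairs with $a_i\ge j$ one cancels, factor by factor, the numerator term $(1-q^{k}x_i/x_n)$ of $(x_i/x_n)_{a_i}$ against the denominator term $(1-q^{-k}x_n/x_i)$ of $(q^{1-j}x_n/x_i;q)_j$; the quotient is then a Laurent polynomial all of whose monomials carry a power of $x_n$ between $-a_i$ and $-j$, hence strictly negative, so the $x_n$-constant term of the whole product is $0$. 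The real obstacle -- the technical heart of the argument -- is to carry this through uniformly when some $a_i$ is smaller than $j$, where the cancellation is only partial and a more global bookkeeping is needed (equivalently, one must prove that a certain multivariate $q$-binomial sum telescopes to zero). Granting the vanishing, $C_n(t)$ and the right-hand side are polynomials of degree at most $s$ in $t$ agreeing at $s+1$ points, hence equal for all $t=q^{a_n}$, which closes the induction on $n$.
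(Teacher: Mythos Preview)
The paper does not itself prove the Zeilberger--Bressoud theorem; it is stated as background and attributed to \cite{Zeil-Bress1985,gess-xin2006,karolyi}. Your outline is exactly the Gessel--Xin strategy from \cite{gess-xin2006}, and the polynomiality/degree count, the choice of the $s+1$ test values $t=1,q^{-1},\dots,q^{-s}$, and the reduction at $a_n=0$ are all correct.

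There is, however, a genuine gap: you do not prove the vanishing $C_n(q^{-j})=0$ for $1\le j\le s$; you explicitly ``grant'' it. Your partial argument only covers the situation where \emph{every} $a_i\ge j$, which almost never occurs (since $j$ runs up to $s=a_0+\cdots+a_{n-1}$, already $j>\min_i a_i$ for all but tiny $j$). In the generic case some factors $(x_i/x_n)_{a_i}(qx_n/x_i)_{-j}$ remain honest rational functions with poles in $x_n$, and one cannot simply read off that the $x_n$-constant term is zero: the constant term must be extracted by partial fractions, and the residues at the surviving poles do not individually vanish. The actual mechanism is a cascade: applying Lemma~\ref{lem-almostprop} to eliminate $x_n$ produces a sum of terms $Q(\,\cdot\mid r_1;k_1)$, each of which is again proper in a new variable, and one iterates; the terms die only at the end of the chain because the accumulated substitutions force a factor of the form $(q^{k_j-k_i-k})_{2k}$ or $(q^{-k_i})_{b+1}$ to vanish. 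This is precisely the content of Lemmas~\ref{lem-important} and~\ref{lem-lem} (Section~\ref{s-2.4}) in the $q$-Morris setting, and the same machinery is what makes \cite{gess-xin2006} work for $q$-Dyson. Without supplying that iterated partial-fraction argument (or an equivalent), the proof is incomplete at its technical heart, as you yourself note.
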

Almost all methods for Dyson's ex-conjecture fail to extend for the $q$ version. Up to now, only three different methods succeeded:
the combinatorial proof in \cite{Zeil-Bress1985}, the short proof in \cite{gess-xin2006} using iterated Laurent series, and the one page proof in \cite{karolyi} using the Combinatorial Nullstellensatz. The methods apply to some constant terms of similar type.

In this paper we study the Habsieger-Kadell $q$-Morris identity, an important variation of the equal parameter case of the $q$-Dyson theorem.
The original identity studies the constant term of the following Laurent polynomial for $m+l\leqslant n$:
\begin{align}\label{eq-qM}
A_{q}&(x_{0},x_{1},\ldots,x_{n};a,b,k,m,l)\nonumber \\
&=\prod_{i=1}^{n}\Big(\frac{q^{\chi(i\leqslant
m)}x_{0}}{x_{i}}\Big)_{a-\chi(i\leqslant m)}
\Big(\frac{q^{\chi(i>m)}x_{i}}{x_{0}}\Big)_{b+\chi(i\leqslant m)+\chi(i\geqslant n-l+1)}\prod_{1\leqslant i<j\leqslant n}
\Big(\frac{x_{i}}{x_{j}}\Big)_{k}\Big(\frac{x_{j}}{x_{i}}q\Big)_{k},
\end{align}
where the expression $\chi(S)$ is $1$ if the statement $S$ is true,
and $0$ otherwise.

In giving a Laurent series proof of the Habsieger-Kadell $q$-Morris identity,
we are able to establish a unified formula that also works for the additional cases $m+l>n$. The result is stated as follows, where
the additional boldfaced part $\mathbf{\chi(i\geqslant 2n-m-l)}$ is only effective when $m+l>n$.
\begin{thm}
\label{t-HabsigerKadell}
For nonnegative integers $a,b,k,m,l$ satisfying $m,l< n$, we have
\begin{align}\label{eq-qMorris}
\CT_{x}A_{q}(x_{0},x_{1},\ldots,x_{n};a,b,k,m,l)=M_{n}(a,b,k,m,l;q),
\end{align}
where
\begin{align}
M_{n}(a,b,k,m,l;q)=\prod_{i=0}^{n-1}\frac{(q)_{a+b+ik+\chi(i\geqslant n-l)}(q)_{(i+1)k}}{(q)_{a+ik-\chi(i<m)}(q)_{b+ik+\chi(i\geqslant n-m-l)+\mathbf{\chi(i\geqslant 2n-m-l)}}(q)_{k}}.
\end{align}
\end{thm}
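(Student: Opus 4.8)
The plan is to adapt the iterated--Laurent--series method of Gessel--Xin for the $q$-Dyson theorem to this more refined constant term. The first step is to fix the field: work in the field of iterated Laurent series with $x_1\prec x_2\prec\cdots\prec x_n\prec x_0$ (or some convenient ordering), so that $A_q$ has a well-defined Laurent expansion and every factor $(x_i/x_j)_c$ or $(q^{?}x_i/x_0)_c$ is a polynomial while the reciprocals that appear in partial fractions are unambiguously expanded. Then I would set, for an indeterminate $y$,
\begin{align*}
F(y)=\prod_{i=1}^{n}\Big(\frac{q^{\chi(i\leqslant m)}y}{x_i}\Big)_{a-\chi(i\leqslant m)}\Big(\frac{q^{\chi(i>m)}x_i}{y}\Big)_{b+\chi(i\leqslant m)+\chi(i\geqslant n-l+1)}\prod_{1\leqslant i<j\leqslant n}\Big(\frac{x_i}{x_j}\Big)_k\Big(\frac{x_jq}{x_i}\Big)_k,
\end{align*}
so that $\CT_{x}A_q=\CT_{x_1,\dots,x_n}F(x_0)$. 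The key structural observation is that $F(y)$, as a function of $y$, is a Laurent polynomial whose $x_0$-degree span is controlled: the number of negative powers of $y$ is $\sum_i(a-\chi(i\le m))$ and the number of nonnegative powers is bounded by the $b$-type exponents. One then expands $F(y)$ by Lagrange interpolation at the points $y=x_r q^{s}$ for appropriate ranges of $r\in\{1,\dots,n\}$ and $s$, exactly the points that make one of the factors $(q^{?}x_r/y)_{?}$ or $(q^{?}y/x_r)_{?}$ vanish.

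The second step is the recursive reduction. Substituting $y=x_r q^{s}$ into $F(y)$ makes $F$ factor: the $n$ interactions of $x_0$ with the $x_i$ collapse, after using $q$-shift identities for the $(z)_c$ symbols, into a constant (a product of $(q)$-Pochhammers) times a copy of $A_q$ in the $n-1$ variables $x_1,\dots,\widehat{x_r},\dots,x_n$, but with the parameters $a,b,m,l$ shifted according to where $r$ falls relative to $m$ and $n-l+1$ and according to the value of $s$. Collecting the Lagrange-interpolation coefficients $\prod_{i\ne r}(x_r q^{s}-x_i q^{t})^{-1}$ against the constant term of the reduced $A_q$ gives a recurrence expressing $M_n(a,b,k,m,l;q)$ as a weighted sum of $M_{n-1}$'s with shifted parameters. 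The final step is to verify that the closed form in the theorem satisfies this recurrence together with the base case $n=1$ (where $A_q$ is a one-variable $q$-Morris-type constant term, evaluable directly or by the $q$-binomial theorem); this is a finite $q$-hypergeometric identity among ratios of $(q)$-Pochhammers that should reduce, after cancellation, to a $q$-Vandermonde or ${}_1\phi_0$ summation.

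I expect the main obstacle to be bookkeeping rather than conceptual: the asymmetry introduced by the characteristic functions $\chi(i\le m)$, $\chi(i>m)$, $\chi(i\ge n-l+1)$ means that the $n$ factors pairing $x_0$ with $x_i$ come in three or four distinct flavors, so the residue computation at $y=x_rq^{s}$ splits into several cases depending on the position of $r$, and in each case one must track precisely how $m$ and $l$ change when the variable $x_r$ is deleted (deleting an $x_r$ with $r\le m$ should decrease $m$, deleting one with $r\ge n-l+1$ should decrease both $n$ and effectively $l$, etc.). Getting these parameter shifts exactly right — and checking that the support of the sum over $(r,s)$ is precisely captured by the Lagrange interpolation, i.e.\ that $F(y)$ really is determined by its values at those points with no missing ``degree at infinity'' term — is the delicate part. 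A secondary technical point is justifying the interchange of $\CT_{x_1,\dots,x_n}$ with the finite interpolation sum, which is immediate in the iterated Laurent series framework but must be stated. Once the recurrence and base case are in hand, the verification of the product formula is routine.
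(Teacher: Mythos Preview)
Your second step contains a genuine gap. With $a,b$ nonnegative integers, $F(y)$ is a Laurent \emph{polynomial} in $y$, and the points $y=x_rq^{s}$ you propose to use are precisely its \emph{zeros}. Lagrange interpolation of $F$ at its own zeros yields only $F(x_rq^{s})=0$; it cannot produce ``a constant times a copy of $A_q$ in the $n-1$ variables'' because the substitution annihilates $F$ outright. There are no residues to compute because there are no poles. What you are describing is essentially a Good-style recurrence in $n$, and this is exactly the mechanism that famously fails to extend from the Dyson to the $q$-Dyson setting; the same obstruction applies here. Even granting some reinterpretation, the recurrence you hope to land on would be a large sum of $M_{n-1}$'s with many different parameter shifts, and there is no reason to expect it to collapse to a $q$-Vandermonde.

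The paper's argument, while also living in the Gessel--Xin iterated-Laurent-series framework, is organized along a different axis. One shows first (via a rationality result of Stembridge) that $\CT_x A_q\cdot (q)_k^n/(q)_{nk}$ is a polynomial in $q^{a}$ of degree at most $nb+m+l$ with coefficients rational in $q^{k}$; equality with $M_n$ then reduces to checking $nb+m+l+1$ values of $a$. Setting $a=-h$ for positive integers $h$ converts the factors $(x_0/x_i)_{a-\chi}$ into \emph{denominators}, so one gets a genuine rational function $Q(h)$ to which partial fractions in $x_0$, then $x_{r_1},x_{r_2},\dots$, can be applied. A combinatorial lemma about the resulting index sequences forces every term to vanish for $nb+m+l$ explicit values of $h$ (the Vanishing Lemma). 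The crux of the paper --- which your sketch does not touch and which the authors say required a ``hard searching process'' --- is locating the single \emph{extra} value $h=(n-l-1)k+b+1$ at which the iterated partial fractions collapse to exactly one surviving term, evaluable by the $m=0$ case or by induction on $n$. No multi-term recurrence in $n$ is ever written down or verified.
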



The $m=l=0,\ q=1$ case of the result is the Morris identity, which is equivalent to the well-known Selberg integral \cite{Selberg}. In his thesis \cite{Morris1982} Morris established the identity and conjectured the $q$-analogous identity. The $q$-Morris identity, or the $m=l=0$ case, was proved by Habsieger \cite{Habsieger} and later by Zeilberger \cite{Zeilberger1989}.
The $m=0,\ q=1$ case of the result, called the Aomoto identity, was constructed by Aomoto \cite{Aomoto}. By extending Aomoto's method Kadell \cite{Kadell}
constructed the $m+l\leqslant n$ case, in the same year of Habsieger's proof. As far as we know, the $m+l>n$ case was not considered before.

Our approach is by extending the proof of the Aomoto identity in \cite{Gessel-Lv-Xin-Zhou2008}.
The basic idea is to regard both sides of
\eqref{eq-qMorris} as polynomials in $q^a$ of degree at most $d=nb+m+l$. Then to show the equality of the two polynomials, it is sufficient to show that they are equal at $d+1$ points. The equality at the $d$ vanishing points are not hard to handle by the techniques in \cite{gess-xin2006,Lv-Xin-Zhou2009}. But in this approach, we have to deal with two problems: i) the multiple roots problem for small $k$; ii) the $d+1$-st suitable point is hard to find. We handle the former problem by a rationality result of Stembridge, and the latter problem by a hard searching process.

We present the major steps of our proof in Section \ref{s-2}. The steps are expanded by the rationality result in Section \ref{s-2.3}, by the proof of the vanishing lemma in Section \ref{s-2.4}, and by the proof for the extra point in Section \ref{s-extra}.


While we were finishing the presented work, the one page proof of the $q$-Dyson theorem was published. Moreover, K\'{a}rolyi and Nagy \cite{Nagy} found a short proof of Theorem \ref{t-HabsigerKadell} in the $l=0$ case using the Combinatorial Nullstellensatz. The two approaches are different but have some connections.

\section{Proof of the Habsieger-Kadell $q$-Morris identity}\label{s-2}

Following notations in the introduction, we may assume that $0\leqslant m,l<n$ by the following argument.
If $m\geqslant n$ then
$$A_{q}(x_0,\ldots,x_n;a,b,k,m,l)
=\prod_{i=1}^{n}\Big(\frac{qx_{0}}{x_{i}}\Big)_{a-1}
\Big(\frac{x_{i}}{x_{0}}\Big)_{b+1+\chi(i\geqslant n-l+1)}\prod_{1\leqslant i<j\leqslant n}
\Big(\frac{x_{i}}{x_{j}}\Big)_{k}\Big(\frac{x_{j}}{x_{i}}q\Big)_{k},$$
which is just $A_{q}(x_0q,\ldots,x_n;a-1,b+1,k,0,l)$. Then by substituting $x_0$ by $x_0/q$, we can see that the constant term is $M_n(a-1,b+1,k,0,l;q)$.
The case $l\geqslant n$ is similar: we observe that
$A_{q}(x_0,\ldots,x_n;a,b,k,m,l)$ can be rewritten as  $A_q(x_0,\ldots,x_n;a,b+1,k,m,0)$.

Let us rewrite $M(q^a,q^k)=M_{n}(a,b,k,m,l;q)$ as
\begin{align}
M(q^a,q^k)
=&\frac{(q)_{nk}}{(q)_k^n } \cdot \prod_{i=0}^{m-1}(1-q^{a+ik})\cdot \prod_{i=n-l}^{n-1}(1-q^{a+ik+b+1}) \nonumber \\
&\cdot \prod_{i=0}^{n-1}\frac{(1-q^{a+ik+1})(1-q^{a+ik+2})\cdots (1-q^{a+ik+b})(q)_{ik}}{(q)_{b+ik+\chi(i\geqslant n-m-l)+\chi(i\geqslant 2n-m-l)}}.
\end{align}

We have the following characterization.
\begin{lem}
For fixed $b, n \in \mathbb{N}$ and $0\leqslant m,l<n$,  $M(q^a, q^k)$ is uniquely determined by the following three properties.
\begin{enumerate}
\item  $M(q^a,q^k)(q)_k^n/(q)_{nk}$ is   a polynomial in $q^a$ of degree $nb+m+l$, whose coefficients are rational functions in $q^k$ and $q$.

\item For any $k>b+1$, $M(q^{-\xi}, q^k)=0$ if $\xi$ belongs to one of the following three sets:
\begin{align}\label{root}
D_1=&\{0,k,\ldots,(m-1)k\}; \nonumber \\
D_2=&\{(n-l)k+b+1,(n-l+1)k+b+1\ldots,(n-1)k+b+1\}; \\
D_3=&\{ik+1,ik+2\ldots,ik+b\mid i=0,\ldots,n-1\}.\nonumber
\end{align}

\item For any $k>b+1$ we have $M(q^{-(n-l-1)k-b-1}, q^k) = M_{n}(-(n-l-1)k-b-1,b,k,m,l;q).$
\end{enumerate}
\end{lem}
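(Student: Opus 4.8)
The plan is to show that $M(q^a,q^k)$ itself has Properties~1--3, and then that these properties pin it down by interpolation. Put $d:=nb+m+l$ and $N(q^a,q^k):=M(q^a,q^k)(q)_k^n/(q)_{nk}$. By Property~1, $N$ is a polynomial of degree $d$ in $q^a$ with coefficients rational in $q^k$; Properties~2--3 prescribe its values at $d+1$ points; since a polynomial of degree $\leqslant d$ is determined by $d+1$ values, $N(\cdot,q^k)$ is forced for every $k>b+1$, and then, its coefficients being rational functions of $q^k$ that agree at the infinitely many values $q^k$ ($k=b+2,b+3,\dots$), it is forced for all $k$. Hence any $\widetilde M$ satisfying Properties~1--3 equals $M$.

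\emph{The three properties.} All three are read off from the product formula for $M(q^a,q^k)$. Cancelling the prefactor $(q)_{nk}/(q)_k^n$ and writing $(q)_{ik}/(q)_{b+ik+\epsilon}=\prod_{j=1}^{b+\epsilon}(1-q^{ik+j})^{-1}$ with $\epsilon=\chi(i\geqslant n-m-l)$ yields
\[
N(q^a,q^k)=\prod_{i=0}^{m-1}(1-q^{a+ik})\cdot\prod_{i=n-l}^{n-1}(1-q^{a+ik+b+1})\cdot\prod_{i=0}^{n-1}\frac{\prod_{j=1}^{b}(1-q^{a+ik+j})}{\prod_{j=1}^{b+\chi(i\geqslant n-m-l)}(1-q^{ik+j})}.
\]
Each factor $1-q^{a+ik+j}=1-q^{j}(q^k)^{i}q^{a}$ is linear in $q^a$ with coefficients polynomial in $q^k$, each denominator factor $1-q^{j}(q^k)^{i}$ is invertible as a rational function of $q^k$, and the denominator does not involve $q^a$; the $m+l+nb$ linear factors make $\deg_{q^a}N=d$ with a nonzero monomial leading coefficient, which is Property~1. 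For Property~2, an $\xi$ in $D_1$, $D_2$, or $D_3$ is of the form $ik$, $(n-l+t)k+b+1$, or $ik+j$ respectively, and then the corresponding factor of $N$ becomes $1-q^{0}=0$ at $q^a=q^{-\xi}$. Property~3 holds by the very definition $M(q^a,q^k)=M_{n}(a,b,k,m,l;q)$.

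\emph{Distinctness of the interpolation nodes.} It remains to see that for $k>b+1$ the $d$ integers in $D_1\cup D_2\cup D_3$ together with $e:=(n-l-1)k+b+1$ are pairwise distinct --- then, $q$ being an indeterminate, the $d+1$ numbers $q^{-\xi}$ are pairwise distinct in $\mathbb{Q}(q)$. Here $|D_1|=m$, $|D_2|=l$, $|D_3|=nb$, and within each set the elements are distinct (arithmetic progressions of step $k$ in $D_1$, $D_2$, and all the $ik+j$ in $D_3$ since $b<k$). Reducing modulo $k$ and using $0<b+1<k$, the elements of $D_3$ have residues in $\{1,\dots,b\}$, those of $D_1$ residue $0$, and those of $D_2$ (as well as $e$) residue $b+1$; this separates $D_3$ from $D_1$, $D_2$, $e$ and $D_1$ from $D_2$, $e$. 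Finally $e\notin D_2$ since $e$ is smaller than every element of $D_2$, and $D_1\cap D_2=\emptyset$ with $e\notin D_1$ because $0<b+1<k$ forbids $b+1$ from being a multiple of $k$. Granting this, for $\widetilde M$ as above put $\widetilde N(q^a,q^k):=\widetilde M(q^a,q^k)(q)_k^n/(q)_{nk}$; the polynomials $\widetilde N(\cdot,q^k)$ and $N(\cdot,q^k)$ of degree $\leqslant d$ agree at the $d$ roots (Property~2) and at $q^{-e}$ (Property~3, both equal to $M_{n}(-e,b,k,m,l;q)$), hence coincide for $k>b+1$, so $\widetilde M=M$ for all $k$.

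I do not expect a serious obstacle within this lemma: the real difficulties announced in the introduction --- that $\CT_{x}A_{q}$ satisfies these same three properties, and the evaluation of $M_{n}(-e,b,k,m,l;q)$ sitting on the right of Property~3 --- are left to the later sections. Inside the lemma only the modular bookkeeping of the previous paragraph requires care, and it is exactly that bookkeeping which forces the hypothesis $k>b+1$ and makes $e=(n-l-1)k+b+1$, the integer just below $D_2$, the natural extra node.
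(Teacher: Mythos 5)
Your proof is correct and uses the same interpolation argument as the paper, but it spells out two points the paper leaves implicit: that $M$ itself satisfies Properties 1--3 (read off the displayed product formula for $M(q^a,q^k)$), and that the $d+1$ nodes $D_1\cup D_2\cup D_3\cup\{(n-l-1)k+b+1\}$ are pairwise distinct when $k>b+1$ (your residue-mod-$k$ bookkeeping), the latter being only hinted at by the remark following the lemma. The remaining difference --- you interpolate over $q^a$ for each fixed $k>b+1$ and then over $q^k$, while the paper first passes to rational functions of $q^k$ at each node and then interpolates over $q^a$ --- is purely cosmetic.
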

\begin{proof}
Assume $M'(q^a,q^k)$ also satisfies the above three properties.  Then for every $\xi \in D_i, i=1,2,3$ or $\xi = (n-l-1)k+b+1$,
$$ M(q^{-\xi}, q^k)(q)_k^n/(q)_{nk}= M'(q^{-\xi}, q^k)(q)_k^n/(q)_{nk}, \qquad \text{ for all } k>b+1.$$
Since both sides are rational functions in $q^k$ and they agree at infinitely many points, they are identical as rational functions.

Now as polynomials in $q^a$, whose coefficients are rational functions in $q^k$ and $q$, $M(q^{a}, q^k)(q)_k^n/(q)_{nk}$ agrees with $M'(q^{a}, q^k)(q)_k^n/(q)_{nk}$ at $nb+m+l+1$ distinct $\xi$'s as above,
they must be equal to each other.
\end{proof}

Note that the condition $k>b+1$ can not be dropped, since $D_3$ has duplicate elements when $k\leqslant b-1$, and $D_3$ or $D_2$ intersects $D_1$ if
$k=b$ or $k=b+1$.

Denote by $M'_{n}(a,b,k,m,l;q)$ the left-hand-side of \eqref{eq-qMorris}. Then Theorem \ref{t-HabsigerKadell} will follow by induction on $n$ if we can show
the following three lemmas, whose proofs will be given in later sections.

\begin{lem}\label{lem-polynomial}
For fixed $b, n \in \mathbb{N}$ and $0\leqslant m,l<n$, $M'_{n}(a,b,k,m,l;q)(q)_k^n/(q)_{nk}$
is a polynomial in $q^{a}$ of degree at most $nb+m+l$, whose coefficients are rational functions in $q^k$ and $q$.
\end{lem}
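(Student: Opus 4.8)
The plan is to follow the corresponding step of the Aomoto proof in \cite{Gessel-Lv-Xin-Zhou2008}: the only factors of $A_q$ that involve $a$ are the $n$ products $\big(q^{\chi(i\leqslant m)}x_0/x_i\big)_{a-\chi(i\leqslant m)}$, and I would expand each of them by the terminating $q$-binomial theorem
$$(z)_{N}=\sum_{j=0}^{N}(-1)^{j}q^{\binom{j}{2}}\qbinom{N}{j}z^{j},$$
noting that $\qbinom{N}{j}$ is a polynomial in $q^{N}$ of degree exactly $j$. Writing
$$B(\mathbf{x})=\prod_{i=1}^{n}\Big(\frac{q^{\chi(i>m)}x_{i}}{x_{0}}\Big)_{b_{i}}\prod_{1\leqslant i<j\leqslant n}\Big(\frac{x_{i}}{x_{j}}\Big)_{k}\Big(\frac{x_{j}}{x_{i}}q\Big)_{k},\qquad b_{i}:=b+\chi(i\leqslant m)+\chi(i\geqslant n-l+1),$$
for the $a$-free part, this expansion presents $A_{q}$, for every integer $a\geqslant1$, as a finite sum
$$A_{q}=\sum_{\mathbf{j}\in\mathbb{N}^{n}}C_{\mathbf{j}}(q^{a})\,x_{0}^{|\mathbf{j}|}\Big(\prod_{i=1}^{n}x_{i}^{-j_{i}}\Big)B(\mathbf{x}),$$
where $|\mathbf{j}|=j_{1}+\cdots+j_{n}$ and $C_{\mathbf{j}}(q^{a})=\prod_{i=1}^{n}(-1)^{j_{i}}q^{\binom{j_{i}}{2}+j_{i}\chi(i\leqslant m)}\qbinom{a-\chi(i\leqslant m)}{j_{i}}$ is a polynomial in $q^{a}$ of degree $|\mathbf{j}|$ with coefficients in $\mathbb{Q}(q)$.

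Next I would take $\CT_{\mathbf{x}}$ term by term. The factor $B(\mathbf{x})$ depends on $x_{0}$ only through $\prod_{i=1}^{n}(q^{\chi(i>m)}x_{i}/x_{0})_{b_{i}}$, which is a polynomial whose $x_{0}$-degree ranges over $\{-\sum_{i}b_{i},\dots,0\}$; since $0\leqslant m,l<n$, exactly $m$ of the indices $i\in\{1,\dots,n\}$ satisfy $i\leqslant m$ and exactly $l$ satisfy $i\geqslant n-l+1$, so $\sum_{i=1}^{n}b_{i}=nb+m+l=:d$. Hence the $\mathbf{j}$-term can contribute to the constant term only when $0\leqslant|\mathbf{j}|\leqslant d$, which gives
$$M'_{n}(a,b,k,m,l;q)=\sum_{|\mathbf{j}|\leqslant d}C_{\mathbf{j}}(q^{a})\,\CT_{\mathbf{x}}\Big[x_{0}^{|\mathbf{j}|}\Big(\prod_{i=1}^{n}x_{i}^{-j_{i}}\Big)B(\mathbf{x})\Big].$$
The left side is therefore a polynomial in $q^{a}$ of degree at most $d=nb+m+l$, whose coefficients are $\mathbb{Q}(q)$-linear combinations of the constant terms appearing on the right; this already settles the polynomiality and the degree bound.

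It remains to show that after multiplication by $(q)_{k}^{n}/(q)_{nk}$ these coefficients become rational functions of $q^{k}$. Taking the constant term in $x_{0}$ first, and expanding $\prod_{i}(q^{\chi(i>m)}x_{i}/x_{0})_{b_{i}}$, reduces each $\CT_{\mathbf{x}}[x_{0}^{|\mathbf{j}|}\prod_{i}x_{i}^{-j_{i}}B(\mathbf{x})]$ to a $\mathbb{Q}(q)$-linear combination of constant terms of the shape
$$\CT_{x_{1},\dots,x_{n}}\Big[\Big(\prod_{i=1}^{n}x_{i}^{c_{i}}\Big)\Omega\Big],\qquad \Omega:=\prod_{1\leqslant i<j\leqslant n}\Big(\frac{x_{i}}{x_{j}}\Big)_{k}\Big(\frac{x_{j}}{x_{i}}q\Big)_{k},\qquad c_{1}+\cdots+c_{n}=0.$$
By the equal-parameter $q$-Dyson identity applied to the variables $x_{1},\dots,x_{n}$ (Theorem \ref{thm-dyson}) we have $\CT_{x_{1},\dots,x_{n}}\Omega=(q)_{nk}/(q)_{k}^{n}$, so multiplying through by $(q)_{k}^{n}/(q)_{nk}$ turns every coefficient into a $\mathbb{Q}(q)$-linear combination of the normalized perturbed Dyson constant terms $\CT[\prod_{i}x_{i}^{c_{i}}\Omega]\big/\CT[\Omega]$, one for each fixed shift vector $(c_{1},\dots,c_{n})$. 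That each such normalized constant term is a rational function of $q^{k}$ is exactly the rationality result to be established in Section \ref{s-2.3}, which is deduced from a rationality theorem of Stembridge for constant terms of this type; I expect this to be the one substantial ingredient, the rest of the argument being elementary bookkeeping. Combining the two parts proves the lemma.
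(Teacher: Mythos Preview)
Your proposal is correct and follows essentially the same approach as the paper: expand the $a$-dependent factors by the $q$-binomial theorem, bound the degree in $q^a$ by counting the $x_0$-degree, and invoke Stembridge's rationality result (Proposition~\ref{thm-rationality}) for the coefficients. The paper's only cosmetic difference is that it first merges each pair $(x_0/x_i)_{a}\,(x_i q/x_0)_{b}$ into a single Pochhammer $(x_0 q^{-b^*_i}/x_i)_{a+b^*_i}$ before expanding, so the constraint $\sum k_i=nb+m+l$ appears as an equality rather than an inequality, but this does not affect the argument.
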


Since $M'_n(a,b,k,m,l;q)$ is a polynomial in $q^a$,
the definition of $a$ can be extended for all integers, in particular for negative integers $a$.

\begin{lem}[Vanishing Lemma]\label{lem-vanish}
For fixed $b, n \in \mathbb{N}$, and $0\leqslant m,l<n$, and $k>b+1$, $ M'_n(-h,b,k,m,l;q)$ vanishes when $h$ equals one of the
values in \eqref{root}.
\end{lem}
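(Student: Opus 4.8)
The plan is to show that for each target value $h\in D_1\cup D_2\cup D_3$ the Laurent series $A_q(x_0,\dots,x_n;-h,b,m,l,k)$ has zero constant term, by exploiting a symmetry of the constant term operator together with the antisymmetry or degenerate structure forced on the integrand when $a=-h$. First I would recall the standard trick from \cite{gess-xin2006,Lv-Xin-Zhou2009}: the constant term $\CT_x$ is invariant under any permutation of the variables $x_1,\dots,x_n$ (and, after tracking the $q$-shifts carefully, under a $q$-shift-twisted transposition of $x_0$ with some $x_j$). So if one can find a substitution $\sigma$ of the variables under which $A_q(x;-h,b,m,l,k)$ is sent to its own negative, or more generally to a Laurent polynomial whose constant term is visibly $0$, the lemma follows. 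Concretely, for the sets $D_1$ and $D_2$ the key is that the Pochhammer factor $\big(q^{\chi(i\le m)}x_0/x_i\big)_{a-\chi(i\le m)}$ becomes a product over a \emph{negative} or \emph{short} range when $a-\chi(i\le m)$ is one of $0,k,\dots,(m-1)k$ (resp.\ when $b+\chi(\cdots)$ combines with $a$ to hit the $D_2$ values), causing certain cross-difference factors $1-x_i/x_j$ to reappear and force an antisymmetry.

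The steps, in order, are: (i) fix $h$ and substitute $a=-h$ into \eqref{eq-qM}, rewriting every $(z)_{m}$ with possibly non-positive lower index using the identity $(z)_{-r}=1/\big(zq^{-r}\big)_{r}=(-1)^r q^{\binom{r}{2}-r\,\cdot}z^{-r}/(q/z)_r$ type relations, so that $A_q$ becomes (a monomial times) a genuine Laurent polynomial in the $x_i$; (ii) identify, depending on which of $D_1,D_2,D_3$ contains $h$, a specific pair of indices $(i,j)$ (or $(0,j)$) such that the resulting Laurent polynomial is antisymmetric under the transposition $x_i\leftrightarrow x_j$ — for $D_3$, where $h=ik+r$ with $1\le r\le b$, the relevant vanishing comes from the factor $\big(x_i/x_0\big)^{?}$ meeting $\big(x_0/x_i\big)$ so that some $\big(qx_i/x_j\big)_k\big(x_j/x_i\big)_k$ gains an extra symmetric/antisymmetric factor; (iii) conclude $\CT_x$ of an antisymmetric Laurent polynomial in two of its variables is $0$, since pairing each monomial with its image under the transposition cancels it. Because the induction in Section~\ref{s-2} reduces $n$, I would also allow invoking the already-known $(n-1)$-variable identity $M'_{n-1}=M_{n-1}$ to handle the boundary cases where one Pochhammer collapses entirely and $A_q$ factors through a lower-rank constant term that is manifestly annihilated.

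The main obstacle I expect is bookkeeping the $q$-powers correctly under the twisted transposition that swaps $x_0$ with an $x_j$: unlike the plain $S_n$ action on $x_1,\dots,x_n$, this swap interacts with the $\chi(i\le m)$ and $\chi(i\ge n-l+1)$ shifts, and getting the exponents to line up so that the integrand is \emph{exactly} negated (rather than negated up to a monomial factor whose constant-term contribution must then be separately argued away) is delicate — this is precisely the place where the parameters $m$ and $l$, absent in the $q$-Morris case, make trouble. A secondary subtlety is the hypothesis $k>b+1$: it is exactly what guarantees the three sets $D_1,D_2,D_3$ of roots are \emph{distinct} (as noted after the characterization lemma), so when $k>b+1$ each $h$ is a \emph{simple} root and the antisymmetry argument suffices; the multiple-root phenomenon for small $k$ is deliberately deferred to the rationality result of Stembridge in Section~\ref{s-2.3} and need not be confronted here.
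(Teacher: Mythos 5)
Your proposal pivots on claiming that after substituting $a=-h$, the integrand $A_q(\mathbf{x};-h,b,m,l,k)$ becomes, up to a monomial, a Laurent polynomial that is antisymmetric under some transposition of variables, and that $\CT_{\mathbf{x}}$ then kills it. This central step (your step (ii)) is asserted, not established, and I do not believe it holds in the required generality. When $a=-h$ with $h\in D_3$, say $h=ik+r$ with $1\le r\le b$, what appears is not a clean sign change under $x_0\leftrightarrow x_j$: the factors $(q^{\chi(i\le m)}x_0/x_i)_{a-\chi(i\le m)}$ with $a$ negative become, after the Pochhammer-inversion you cite, rational functions whose denominators contain $h$ or $h+1$ factors of the form $1-x_0/(x_iq^t)$. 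The object one is left with is not a Laurent polynomial at all; it is a proper rational function of $x_0$, and the right move is partial-fraction decomposition in $x_0$, not a hunt for a sign-reversing involution. Moreover, the $q$-shift-twisted transposition you invoke is not a trick from \cite{gess-xin2006} or \cite{Lv-Xin-Zhou2009}; those papers work with iterated Laurent series, partial fractions via Lemma~\ref{lem-almostprop}, and degree counts, and that is exactly what the paper does here.

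Concretely, the paper rewrites $M'_n(-h,\ldots)=\CT_{\mathbf{x}}Q(h)$ for the rational function $Q(h)$ of \eqref{eq-qM}, checks that $Q(h)$ is proper in $x_0$, applies Lemma~\ref{lem-almostprop} to write $\CT_{x_0}Q(h)$ as a sum of specializations $Q(h\mid r_1;k_1)$, and then iterates: Lemma~\ref{lem-lem}(ii) shows each surviving specialization stays proper in the next variable $x_{r_s}$ (a delicate degree count in which the bound $h>sk-\chi(s<m)$ is what keeps the degree negative), while Lemma~\ref{lem-lem}(i), driven by the arithmetic Lemma~\ref{lem-important}, identifies which specializations vanish outright because a Pochhammer factor acquires a zero. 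Induction on $n-s$ closes the argument. None of this has an antisymmetry underlying it; the vanishing comes from explicit zero factors of the form $(q^{-k_i})_{b+1+\cdots}$ or $(q^{k_j-k_i-k})_{2k}$, not from cancellation of a monomial with its image under a transposition. Your secondary remarks are also slightly off: $k>b+1$ is needed so that the elements \emph{within} $D_3$ are distinct (the paper says exactly this after the characterization lemma), and Stembridge's rationality result is invoked in Lemma~\ref{lem-polynomial} to let one extend the conclusion from $k>b+1$ to all $k$, not to dispose of multiple roots inside the vanishing lemma itself. As written, the proposal does not yield a proof; the decisive antisymmetry would have to be exhibited, and I see no reason to expect it exists once $m,l>0$.
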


\begin{remark}\label{remark1}
At this stage we can already claim the truth of Theorem \ref{t-HabsigerKadell} for $m=0$. In this case $D_1$ is empty, so that we can choose
$a=0$ as the extra point. Then $M'_n(0,b,k,0,l;q)$ reduces to $M'_n(0,0,k,0,0;q)$, and the equal parameter case of the $q$-Dyson theorem
applies.
\end{remark}

\begin{lem}\label{lem-extra}
 For fixed $b, n \in \mathbb{N}$, and $0\leqslant m,l<n$, and $k>b+1$,
 if we assume Theorem \ref{t-HabsigerKadell} holds for smaller values of $n$,
then $M'_n(-h,b,k,m,l;q)=M_n(-h,b,k,m,l;q)$ when
$h=(n-l-1)k+b+1$.
\end{lem}
The extra point in the above lemma is found through a hard searching process. It is a surprise for this special $h$:
the constant term  $ M'_n(-h,b,k,m,l;q)$ reduces to a single constant term that can be evaluated by Remark \ref{remark1} or the hypothsis.

\section{The polynomial-rational characterization}\label{s-2.3}
To prove Lemma \ref{lem-polynomial}, we need the the following rationality result, which is implicitly due to
J.R. Stembridge \cite{stembridge1987}, as can be seen from the proof. The $q=1$ case of this result is the equal parameter case of \cite[Proposition 2.4]{Gessel-Lv-Xin-Zhou2008}.
\begin{prop}\label{thm-rationality}
For any $n\in \mathbb{N}$ and $\alpha=(\alpha_1,\ldots,\alpha_n)\in \mathbb{Z}^{n}$ with
$\sum_{1\leqslant i\leqslant n} \alpha_i=0$,  we have
\begin{align}
[x^{\alpha}]\prod_{1\leqslant i<j\leqslant n}
\Big(\frac{x_{i}}{x_{j}}\Big)_{k}\Big(\frac{x_{j}}{x_{i}}q\Big)_{k}=\frac{(q)_{nk}}{(q)_{k}^{n}}\cdot
R_n(q^k;q).
\end{align}
where $R_n(q^k;q)$ is a rational function in $q^k$ and $q$, and
$[x^{\alpha}]$ refers to take the coefficient of
$x_1^{\alpha_1}\cdots x_n^{\alpha_n}$ in the polynomial.
\end{prop}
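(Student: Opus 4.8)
The plan is to extract the coefficient $[x^{\alpha}]$ one variable at a time, using the iterated Laurent series perspective, and to track how the "base" quantity $(q)_{nk}/(q)_k^n$ factors out. First I would fix a field $K = \mathbb{Q}(q)$ and work in the ring of Laurent polynomials in $x_1,\ldots,x_n$; the product $P := \prod_{1\leqslant i<j\leqslant n}(x_i/x_j)_k(x_jq/x_i)_k$ is a genuine Laurent polynomial, symmetric in a suitable twisted sense, whose coefficients lie in $\mathbb{Z}[q]$. The claim is that, after dividing by $(q)_{nk}/(q)_k^n$, the coefficient of any $x^\alpha$ with $\sum\alpha_i=0$ becomes a rational function in the single quantity $q^k$ (with coefficients in $\mathbb{Q}(q)$), i.e.\ it is not merely a polynomial in $q^k$ but lies in $\mathbb{Q}(q)(q^k)$ in a way uniform in $k$.

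The key steps, in order: (1) Recall Stembridge's result \cite{stembridge1987}: the normalized constant term / coefficient functionals applied to such Macdonald-type kernels are rational in the parameter $q^k$; concretely, Stembridge shows that for the $A_{n-1}$ root system the relevant "principal specialization"-type coefficients, viewed as functions of the exponent $k$, are ratios of products of the form $\prod(1-q^{j}q^{ik})$, hence rational in $q^k$. I would isolate exactly the statement needed: for each fixed $\alpha$, the map $k \mapsto [x^\alpha]P$ extends to a rational function of $q^k$. (2) Identify the overall normalization: the $q$-Dyson theorem (Theorem \ref{thm-dyson}) with all $a_i = k$ gives $\CT_{\mathbf x} P = [x^0]P = (q)_{nk}/(q)_k^n$, which pins down the normalizing factor and shows the $\alpha = 0$ case of the proposition with $R_n = 1$. (3) For general $\alpha$, use the recursive structure: expand $P$ as a Laurent series in $x_n$ (in the iterated order $x_1 \prec \cdots \prec x_n$), extract the coefficient of $x_n^{\alpha_n}$; each such coefficient is, by the $q$-Dyson-type recurrences in \cite{gess-xin2006, Lv-Xin-Zhou2009}, a $\mathbb{Q}(q)$-linear combination of constant terms of lower products $\prod_{i<j\le n-1}(\cdot)_k(\cdot)_k$ against monomials, with coefficients that are rational in $q^k$; then induct on $n$. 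The base of the normalization factor peels off correctly because $(q)_{nk}/(q)_k^n = \big((q)_{nk}/(q)_{(n-1)k}(q)_k\big)\cdot (q)_{(n-1)k}/(q)_k^{n-1}$, the first factor being a $q$-binomial coefficient $\qbinom{nk}{k}$, manifestly rational in $q^k$.

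I expect the main obstacle to be step (1)–(3) interface: making Stembridge's rationality \emph{uniform in $k$} rather than a statement for each individual integer $k$, and ensuring the recursion does not introduce spurious denominators that fail to be rational in $q^k$ (e.g.\ factors like $1 - q^{j}$ with $j$ depending on $k$ in a non-affine way). The clean way around this is to phrase everything in terms of the operator that Stembridge actually analyzes — his argument produces the coefficients as matrix entries of a product of explicit transfer-type operators whose entries are rational in $q^k$ — so that rationality is structural rather than point-by-point. A secondary technical point is bookkeeping the exponent shifts: when I pull out $x_n^{\alpha_n}$, the residual product is not literally $\prod_{i<j\le n-1}(x_i/x_j)_k(x_jq/x_i)_k$ but that product times monomial correction factors, so I must either absorb these into a slightly more general inductive hypothesis (allowing an arbitrary monomial weight with zero total degree, which is exactly the form of the proposition) or invoke the known closed recurrences; I would take the former route since the proposition as stated is already in that general form.
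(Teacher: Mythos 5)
Your step (1) — invoking Stembridge's rationality result — is exactly the paper's approach, and the paper makes it precise by quoting three specific facts from \cite{stembridge1987}: Equation~44 there, which writes $[x^{\alpha}]\prod_{i<j}(x_i/x_j)_k(x_jq/x_i)_k$ as $(q;q)_{k-1}^{-n}\sum_{S}\pm(-q^k)^{|S|}C^n[S](q^k,q)$ where the sum is over a \emph{finite index set that does not grow with $k$}; Corollary~3.3, which gives $C^n[\varnothing](q^k,q)=(q)_{nk}/\prod_{i=1}^n(1-q^{ik})$; and the stated fact that each $C^n[\lambda]$ equals $f_\lambda(q^k,q)\cdot C^n[\varnothing]$ for some rational $f_\lambda$. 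Assembling these three facts literally gives the claimed factor $(q)_{nk}/(q)_k^n$ times a rational function of $q^k$, with no induction needed. Your step (2), checking $\alpha=0$ via the equal-parameter $q$-Dyson theorem, is harmless but not used by the paper.

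Your proposed alternative in step (3) has a real gap, and it is the one you half-suspect. When you expand $\prod_{i<n}(x_i/x_n)_k(x_nq/x_i)_k$ as a Laurent polynomial and extract $[x_n^{\alpha_n}]$, the residual is a sum over all exponent vectors $\gamma=(\gamma_1,\ldots,\gamma_{n-1})$ in the support of that product with $\sum\gamma_i=-\alpha_n$; this support is a box of side $O(k)$, so the sum has on the order of $k^{n-2}$ terms for $n\geqslant 3$, a count that grows with $k$. Each term, by the inductive hypothesis applied to $[x^{\alpha-\gamma}]\prod_{i<j\leqslant n-1}(\cdot)_k(\cdot)_k$, is $\big((q)_{(n-1)k}/(q)_k^{n-1}\big)\cdot(\text{rational in }q^k)$, but a sum of a $k$-dependent number of rational functions of $q^k$ is not automatically rational in $q^k$. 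So the induction does not close. You correctly diagnose the fix --- ``phrase everything in terms of the operator that Stembridge actually analyzes'' so that the decomposition into rational pieces is finite and uniform in $k$ --- and that fix \emph{is} the paper's proof: it simply quotes Stembridge's operator-level decomposition (his Eq.~44) rather than trying to re-derive a recursion by brute-force coefficient extraction. If you want a self-contained argument you would need to reproduce Stembridge's transfer-operator structure; the peel-one-variable expansion you sketch cannot substitute for it.
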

\begin{proof}
In \cite[Equation 44]{stembridge1987} Stembridge gave the following
equation (set $z=q^k$)
\begin{align}\label{eq-1}
[x^{\alpha}]\prod_{1\leqslant i<j\leqslant n}
\Big(\frac{x_{i}}{x_{j}}\Big)_{k}\Big(\frac{x_{j}}{x_{i}}q\Big)_{k}=\frac{1}{(q)_{k-1}^{n}}\sum_{S}\pm (-q^k)^{|S|}C^{n}[S](q^k,q),
\end{align}
where the summation is taken over some elements whose number
is bounded by a function of $n$
and
$C^{n}[S](q^k,q)$ is a formal power series in $q^k$ and $q$.

By
\cite[Corollary 3.3]{stembridge1987} we know that
\begin{align}\label{eq-2}
C^{n}[\varnothing](q^k,q)= \frac{(q)_{nk}}{(1-q^k)(1-q^{2k})\cdots (1-q^{nk})}.
\end{align}

In \cite[Page 334, Line 33]{stembridge1987} Stembridge stated that
$C^{n}[\lambda](q^k,q)$ is of the form $f_{\lambda}(q^k,q)\cdot
C^{n}[\varnothing](q^k,q)$ for some rational function $f_\lambda$.
Therefore, combining with \eqref{eq-1} and \eqref{eq-2} we get
\begin{align}\label{eq-H2}
[x^{\alpha}]\prod_{1\leqslant i<j\leqslant n}
\Big(\frac{x_{i}}{x_{j}}\Big)_{k}\Big(\frac{x_{j}}{x_{i}}q\Big)_{k}=\frac{(q)_{nk}}{(q)^n_{k-1}(1-q^k)\cdots (1-q^{nk})}
\sum_{S}\pm (-q^k)^{|S|}f_{S}(q^k,q).
\end{align}
The desired rational function is then given by
\begin{align*}
R_n(q^{k},q)=\frac{(1-q^k)^n}{(1-q^k)(1-q^{2k})\cdots (1-q^{nk})}\sum_{S}\pm (-q^{k})^{|S|}f_{S}(q^{k},q).
\end{align*}
\end{proof}

\begin{proof}[Proof of Lemma \ref{lem-polynomial}]
When regarded as Laurent
series in $x_0$, the equality
\begin{align*}
\left(\frac{x_0}{x_i} \right)_{\!\!a}\!\left(\frac{x_i}{x_0}q
\right)_{\!\!b}&=q^{\binom{b+1}{2}}\left(-\frac{x_i}{x_0}\right)^{\!\!b}
\!\left(\frac{x_0}{x_i} q^{-b}\right)_{\!\!a+b}
\end{align*}
can be easily shown to hold for all integers $a$. Rewrite
$M'_n(a,b,k,m,l;q)$ as
\begin{align}\label{e-product}
\CT_x \prod_{i=1}^n q^{b^*_i+1 \choose 2}\left(-\frac{x_i}{x_0}\right)^{b^{*}_i+\chi(i\leqslant m)}
\left(\frac{x_0}{x_i}q^{-b^{*}_i}\right)_{a+b^{*}_i}
\prod_{1\leqslant i<j\leqslant n}
\Big(\frac{x_{i}}{x_{j}}\Big)_{k}\Big(\frac{x_{j}}{x_{i}}q\Big)_{k},
\end{align}
where $b^*_i=b+\chi(i\geqslant n-l+1)$.

The well-known $q$-binomial theorem \cite[Theorem
2.1]{andrew-qbinomial} is the identity
\begin{align}
\label{e-qbinomial} \frac{(bz)_\infty}{(z)_\infty} =
\sum_{k=0}^\infty \frac{(b)_k}{(q)_k} z^k.
\end{align}
Setting $z=uq^n$ and $b=q^{-n}$ in \eqref{e-qbinomial}, we obtain
\begin{align}\label{e-qbinomialn}
(u)_n=\frac{(u)_\infty}{(uq^n)_\infty}=
\sum_{k=0}^\infty q^{k(k-1)/2}\qbinom{n}{k} (-u)^k
\end{align}
for all integers $n$, where $\qbinom{n}{k}=\frac{(q)_n}{(q)_k(q)_{n-k}}$ is the $q$-binomial coefficient.

Using \eqref{e-qbinomialn}, we see
that for $1\leqslant i\leqslant n$, {\small
\begin{align*} q^{b^*_i+1 \choose 2}\left(-\frac{x_i}{x_0}\right)^{b^{*}_i+\chi(i\leqslant m)}
\left(\frac{x_0}{x_i}q^{-b^{*}_i}\right)_{a+b^{*}_i}
=\sum_{k_i\geqslant 0}C(k_i) \qbinom{a+b^*_i}{k_i}
x_0^{k_i-b^*_i-\chi(i\leqslant m)}x_i^{b^*_i+\chi(i\leqslant m)-k_i},
\end{align*}}
where $C(k_i)=(-1)^{k_i+b^*_i+\chi(i\leqslant m)}q^{\binom{b^*_i+1}2 + \binom {k_i}2
-k_ib^*_i}$.

Expanding the first product in \eqref{e-product} and  taking constant
term in $x_0$, we see that, by Proposition \ref{thm-rationality}, $M'_n(a,b,k,m,l;q)$ becomes
\begin{align} \label{e-midle} \sum_{\mathbf{k}}
\prod_{i=1}^n \qbinom{a+b^*_i}{k_i} \CT_{x_1,\dots,x_n} L(x_1,\dots
,x_n;\mathbf{k})= \frac{(q)_{nk}}{(q)_k^n} \sum_{\mathbf{k}}
\prod_{i=1}^n \qbinom{a+b^*_i}{k_i}  R(q^k,q; \mathbf{k})  ,
\end{align}
for some rational functions $R(q^k,q; \mathbf{k}) $ in $q^k$ and $q$, where
\begin{align}\label{e-L}
L(x_1, \dots, x_n;\mathbf{k})=
&q^{(n-l)\binom{b+1}{2}+l\binom{b+2}{2}+\sum_{i=1}^n\binom{k_i}{2}
-b\sum_{i=1}^{n-l}k_i-(b+1)\sum_{i=n-l+1}^n k_i} \nonumber \\
&\cdot \prod_{i=1}^m x_i^{b^*_i+1-k_i}
\prod_{i=m+1}^n x_{i}^{b^*_i-k_{i}}
\prod_{1\leqslant i<j\leqslant n}
\Big(\frac{x_{i}}{x_{j}}\Big)_{k}\Big(\frac{x_{j}}{x_{i}}q\Big)_{k}
\end{align}
is a Laurent
polynomial in $x_1, \dots, x_n$ independent of $a$ and the sum
ranges over all sequences $\mathbf{k}=(k_1,\dots, k_n)$ of
nonnegative integers satisfying $k_1+k_2+\cdots+k_n=nb+m+l.$ Since
$\qbinom{a+b^*_i}{k_i}$ is a polynomial in $q^{a}$ of degree
$k_i$, each summand in \eqref{e-midle} is a polynomial in $q^{a}$
of degree at most $k_1+k_2+\cdots +k_n=nb+m+l$, and so is the sum.

The coefficients of $M'_n(a,b,k,m,l;q)(q)_k^n/ (q)_{nk}   $ in $q^a$ are clearly rational functions in $q^k$ and $q$. 
\end{proof}

\section{Proof of the vanishing lemma}\label{s-2.4}
We will follow notations in \cite{gess-xin2006,Lv-Xin-Zhou2009}, where different versions of the vanishing lemma were proposed for dealing with $q$-Dyson related constant terms.
The new vanishing lemma will be handled by the same idea but we have to carry out the details. We
will include some basic ingredients for readers' convenience.

In this section, we let $K=\CC(q)$, and assume that all series are in
the field of iterated Laurent series $K(\!(x_n)\!)(\!(x_{n-1})\!)\cdots (\!(x_0)\!)$. The reason for choosing
$K(\!(x_n)\!)(\!(x_{n-1})\!)\cdots (\!(x_0)\!)$ as a
working field has been explained in \cite{gess-xin2006}.

We emphasize that the field of rational functions is a subfield of
$K(\!(x_n)\!)(\!(x_{n-1})\!)\cdots (\!(x_0)\!)$, so that
every rational function is identified with its unique iterated
Laurent series expansion. The series expansions of $1/(1-q^k
x_i/x_j)$ will be especially important.
\begin{align*}
  \frac{1}{1-q^k x_i/x_j}&=\sum_{l= 0}^\infty q^{kl} x_i^l x_j^{-l}, \text{ if } i<j,\\
\frac{1}{1-q^k x_i/x_j} & =\frac1{-q^k
x_i/x_j(1-q^{-k}x_j/x_i)}
=\sum_{l=0}^\infty -q^{-k(l+1)} x_i^{-l-1}x_j^{l+1}, \text{ if } i>j.
\end{align*}
The constant term of the series $F(\mathbf{x})$ in $x_i$, denoted by
$\CT_{x_i} F(\mathbf{x})$, is defined to be the sum of those terms
in $F(\mathbf{x})$ that are free of $x_i$. It follows that
\begin{equation}
\label{e-ct} \CT_{x_i} \frac{1}{1-q^k x_i/x_j} =
\begin{cases}
    1, & \text{ if }i<j, \\
    0, & \text{ if }i>j. \\
\end{cases}
\end{equation}
We shall call the monomial $M=q^k x_i/x_j$ \emph{small} if $i<j$ and
\emph{large} if $i>j$.  Thus the constant term in $x_i$ of $1/(1-M)$
is $1$ if $M$ is small and $0$ if $M$ is large.

Constant term operators defined in this
way has the important commutativity property:
$$\CT_{x_i} \CT _{x_j} F(\mathbf{x}) = \CT_{x_j} \CT_{x_i} F(\mathbf{x}).$$

The \emph{degree} of a rational function of $x$ is the degree in $x$
of the numerator minus the degree  in $x$ of the denominator. For
example, if $i\ne j$ then  the degree  of $1-x_j/x_i=(x_i-x_j)/x_i$
is $0$ in $x_i$ and $1$ in $x_j$. A rational function is called
\emph{proper} (resp. \emph{almost proper}) in $x$ if its degree in
$x$ is negative (resp. zero).

Let
\begin{align}\label{e-defF}
F=\frac{p(x_k)}{x_k ^d \prod_{i=1}^m (1-x_k/\alpha_i)}
\end{align}
be a rational function of $x_k$, where $p(x_k)$ is a polynomial in
$x_k$, and the $\alpha_i$ are distinct monomials, each of the form
$x_t q^s$. Then the partial fraction decomposition of $F$ with
respect to $x_k$ has the following form:
{\small\begin{align}\label{e-defFs}
F=p_0(x_k)+\frac{p_1(x_k)}{x_k^d}+\sum_{j=1}^m \frac{1}{1-
x_k/\alpha_j}  \left. \left(\frac{p(x_k)}{x_k^d \prod_{i=1,i\ne j}^m
(1-x_k/\alpha_i)}\right)\right|_{x_k=\alpha_j},
\end{align}}
where $p_0(x_k)$
is a polynomial in $x_k$, and $p_1(x_k)$ is a polynomial in $x_k$ of
degree less than $d$.

The following lemma has appeared in \cite{Lv-Xin-Zhou2009}.
\begin{lem}\label{lem-almostprop}
Let $F$ be as in \eqref{e-defF} and \eqref{e-defFs}.
 Then
\begin{align}\label{e-almostprop}
\CT_{x_k} F=p_0(0) +\sum_j  \bigl(F\,
(1-x_k/\alpha_j)\bigr)\Bigr|_{x_k =\alpha_j},
\end{align}
where 
the sum ranges over all $j$ such that $x_k/\alpha_j$ is small. In
particular, if $F$ is proper in $x_k$, then $p_0(x_k)=0$; if $F$ is
almost proper in $x_{k}$, then
$p_0(x_k)=(-1)^m\prod_{i=1}^m\alpha_{i}\LC_{x_{k}}p(x_k)$, where
$\LC_{x_k}$ means to take the leading coefficient with respect to
$x_k$.
\end{lem}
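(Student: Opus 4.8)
The plan is to prove Lemma \ref{lem-almostprop} by a direct analysis of the partial fraction decomposition \eqref{e-defFs} together with the elementary constant term rules \eqref{e-ct}. First I would take the constant term operator $\CT_{x_k}$ and apply it termwise to \eqref{e-defFs}, using linearity. The polynomial piece $p_0(x_k)$ contributes only its constant coefficient $p_0(0)$. The term $p_1(x_k)/x_k^d$ has strictly negative degree in $x_k$ and involves only nonpositive powers of $x_k$ (since $\deg p_1 < d$), so it contributes nothing: every monomial in it is a negative power of $x_k$, hence not free of $x_k$. For each of the $m$ summands of the form $\frac{1}{1-x_k/\alpha_j}\cdot c_j$, where $c_j = \bigl(F(1-x_k/\alpha_j)\bigr)\big|_{x_k=\alpha_j}$ is the residue-type coefficient free of $x_k$ (note $\alpha_j = x_t q^s$ for $t\neq k$), I apply \eqref{e-ct}: the constant term in $x_k$ of $1/(1-x_k/\alpha_j)$ is $1$ if $x_k/\alpha_j$ is small and $0$ if it is large. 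This yields exactly $p_0(0) + \sum_j c_j$ with the sum over small $x_k/\alpha_j$, which is \eqref{e-almostprop}.

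The second part of the statement identifies $p_0(x_k)$ in the proper and almost proper cases. If $F$ is proper in $x_k$, then $\deg_{x_k} p(x_k) < d + m$, so $F$ has negative degree and its partial fraction expansion can have no polynomial part: $p_0(x_k) = 0$, giving $\CT_{x_k} F = \sum_j c_j$. If $F$ is almost proper, then $\deg_{x_k} p(x_k) = d+m$ exactly, so $F$ behaves like a nonzero constant as $x_k \to \infty$, and $p_0(x_k)$ must be that constant. I would compute it by comparing leading behavior: as $x_k \to \infty$, the product $x_k^d \prod_{i=1}^m (1-x_k/\alpha_i)$ has leading term $x_k^{d+m}\prod_{i=1}^m(-1/\alpha_i) = (-1)^m x_k^{d+m}/\prod_{i=1}^m \alpha_i$, while $p(x_k)$ has leading term $(\LC_{x_k} p(x_k))\, x_k^{d+m}$. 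Taking the ratio gives $p_0(x_k) = (-1)^m \prod_{i=1}^m \alpha_i \cdot \LC_{x_k} p(x_k)$, a monomial-valued constant free of $x_k$. Then $p_0(0) = p_0(x_k)$ here, so $\CT_{x_k} F = (-1)^m\prod_{i=1}^m\alpha_i\,\LC_{x_k}p(x_k) + \sum_j c_j$.

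I do not expect any serious obstacle; the lemma is essentially bookkeeping once the iterated Laurent series framework and the small/large dichotomy are in place. The one point requiring a little care is the claim that the term $p_1(x_k)/x_k^d$ genuinely contributes nothing to the constant term: this uses both that $p_1$ is a polynomial and that $\deg p_1 < d$, so that $p_1(x_k)/x_k^d$ is a Laurent polynomial in $x_k$ with all exponents in the range $[-d,-1]$, none of which is zero. A second minor point is that the coefficients $c_j$ are indeed free of $x_k$: since each $\alpha_j$ has the form $x_t q^s$ with $t \neq k$, substituting $x_k = \alpha_j$ into $F(1-x_k/\alpha_j)$ produces an expression in the remaining variables only, so it commutes out of $\CT_{x_k}$. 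With these observations the proof is a two-line verification, which is presumably why the paper cites it from \cite{Lv-Xin-Zhou2009} rather than reproving it in full.
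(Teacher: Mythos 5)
Your proof is correct and is the natural argument: apply $\CT_{x_k}$ termwise to the partial fraction decomposition \eqref{e-defFs}, use the rule \eqref{e-ct} for each simple-pole piece $c_j/(1-x_k/\alpha_j)$ (the coefficient $c_j$ being free of $x_k$ because $\alpha_j=x_tq^s$ with $t\ne k$), and note that $p_1(x_k)/x_k^d$ contributes nothing since $\deg p_1<d$ forces every monomial to have exponent in $[-d,-1]$. Your treatment of the proper and almost proper cases by comparing leading terms in $x_k$ is also correct: the denominator $x_k^d\prod_{i=1}^m(1-x_k/\alpha_i)$ has leading coefficient $(-1)^m/\prod_i\alpha_i$, giving $p_0=(-1)^m\prod_i\alpha_i\,\LC_{x_k}p$ in the almost proper case. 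Note that the paper itself offers no proof of this lemma but cites \cite{Lv-Xin-Zhou2009}; your write-up is the standard argument and fills that gap correctly.
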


The following lemma plays an important role in our argument.
\begin{lem}\label{lem-important}
Let $k$, $b$ and $k_1,\ldots,k_s$ be nonnegative integers. Then for any
$k_{1},\ldots,k_{s}$ with $0\leqslant k_{i} \leqslant (s-1)k+b+1$ for all
$i$, either $0\leqslant k_{i}\leqslant b$ for some $i$, or $1-k\leqslant k_{j}-k_{i}\leqslant k$ for
some $i<j$, except only when $k_{i}=(s-i)k+b+1$ for $i=1,\ldots,s$.
\end{lem}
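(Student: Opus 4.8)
\textbf{Proof proposal for Lemma \ref{lem-important}.}

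The plan is to prove the contrapositive: assume that the exceptional case fails, i.e.\ it is \emph{not} the case that $k_i=(s-i)k+b+1$ for all $i=1,\dots,s$, and that moreover $k_i>b$ for every $i$ (so the first alternative fails); then I must produce a pair $i<j$ with $1-k\leqslant k_j-k_i\leqslant k$ (the second alternative). First I would sort the values: since the conclusion $1-k\leqslant k_j-k_i\leqslant k$ for \emph{some} $i<j$ is not affected by which labels carry which values once we track signs carefully, I would instead argue directly on the sorted sequence. Actually the cleanest route is to reason about ``gaps'': consider the $s$ numbers $k_1,\dots,k_s$ lying in the interval $[b+1,\ (s-1)k+b+1]$, an interval of length $(s-1)k$. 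If all consecutive gaps (in sorted order) were large, the values would be forced to spread out exactly, pinning down the configuration; any deviation creates a small gap, which after unwinding the sorting gives the desired $i<j$.

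In detail, the key steps are as follows. Step 1: suppose $k_i\geqslant b+1$ for all $i$. Step 2: I claim that if for every pair $i<j$ we have $k_j-k_i\geqslant k+1$ or $k_j-k_i\leqslant -k$, then the $k_i$ are pairwise distinct and, after sorting them as $k_{\sigma(1)}<k_{\sigma(2)}<\dots<k_{\sigma(s)}$, consecutive sorted values differ by at least $k$; combined with $k_{\sigma(1)}\geqslant b+1$ this forces $k_{\sigma(t)}\geqslant (t-1)k+b+1$, and since $k_{\sigma(s)}\leqslant (s-1)k+b+1$ all these inequalities are equalities, so $k_{\sigma(t)}=(t-1)k+b+1$ for each $t$. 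Step 3: it remains to show the sorting permutation must be $\sigma(t)=s+1-t$, i.e.\ the original sequence is strictly decreasing. Here is where the asymmetry $1-k\leqslant k_j-k_i\leqslant k$ versus a symmetric condition matters: if for some $i<j$ one had $k_i<k_j$, then (using that in the ``bad'' case $k_j-k_i\geqslant k+1$ or $k_j-k_i\leqslant -k$, and $k_i<k_j$ rules out the second) $k_j-k_i\geqslant k+1$; but then $k_j-k_i\leqslant k$ fails and $1-k\leqslant k_j-k_i$ holds—wait, that still gives the small gap via the lower bound only if $k_j-k_i\leqslant k$. So instead: if $k_i<k_j$ for some $i<j$, I directly get $k_j-k_i\geqslant 1\geqslant 1-k$, and I must also get $k_j-k_i\leqslant k$; this need not hold, so the argument must be that \emph{the smallest} such ascent gives a controlled gap. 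The correct formulation: among all pairs, pick $i<j$ with $k_i<k_j$; then between the sorted positions of $k_i$ and $k_j$ there are consecutive sorted values, and a consecutive sorted pair whose two indices appear in increasing order in the original sequence yields $1\leqslant k_j-k_i = k$ (the exact sorted gap), hence $1-k\leqslant k_j-k_i\leqslant k$. Thus the exception forces no ascent in the original order, i.e.\ $k_1>k_2>\dots>k_s$, which combined with Step 2 gives $k_i=(s-i)k+b+1$.

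The main obstacle I anticipate is Step 3: making precise the passage from ``some ascent exists in the original labelling'' to ``a \emph{consecutive-in-sorted-order} ascent exists in the original labelling,'' so that the gap is exactly $k$ rather than a large multiple of $k$. This is a small combinatorial argument about permutations: if the original sequence is not strictly decreasing then there exist $i<j$ with $k_i<k_j$, and then among all values strictly between $k_i$ and $k_j$ in size one can slide to an adjacent-in-sorted-order pair still realised by original indices in increasing order; for that adjacent pair the difference is precisely the common sorted gap, which Step 2 has shown equals $k$. Once this bookkeeping is done the rest is immediate arithmetic. A secondary point to handle carefully is the degenerate ranges $s=0$ or $s=1$, where the statement is vacuous or trivial, and the case $k=0$, where ``$1-k\leqslant k_j-k_i\leqslant k$'' becomes $k_j=k_i$ and the interval $[b+1,b+1]$ is a single point so all $k_i$ equal $b+1=(s-i)k+b+1$, consistent with the exception.
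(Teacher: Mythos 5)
Your proof is correct, and it takes a genuinely different route from the paper's. The paper argues by a pigeonhole scheme: it partitions $[b+1,(s-1)k+b+1]$ into $s-1$ length-$k$ blocks, forces two $k_i$'s into the first block at its endpoints $b+1$ and $k+b+1$, and then iterates a case analysis to pin each remaining $k_i$ to a block boundary and fix the order of the indices. Your approach sorts the values as $k_{\sigma(1)}<\dots<k_{\sigma(s)}$ and runs a squeeze: under the negation of both alternatives, a consecutive sorted pair with $\sigma(t)<\sigma(t+1)$ forces $k_{\sigma(t+1)}-k_{\sigma(t)}\geqslant k+1$, while one with $\sigma(t)>\sigma(t+1)$ forces $k_{\sigma(t+1)}-k_{\sigma(t)}\geqslant k$; combined with $k_{\sigma(1)}\geqslant b+1$ and $k_{\sigma(s)}\leqslant (s-1)k+b+1$ this forces every gap to be exactly $k$ and hence no consecutive pair to be an ascent, so $\sigma$ is the reversing permutation and $k_i=(s-i)k+b+1$. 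This is cleaner and more transparent than the paper's iterative block argument, and isolates nicely where the asymmetry between $k+1$ and $k$ in the two directions of the inequality is used.

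One remark on your Step 3: you anticipate an ``obstacle'' in passing from some ascent to a consecutive-in-sorted-order ascent and sketch a sliding argument. This is more work than needed. Once Step 2 has shown that every consecutive sorted gap equals $k$ exactly, the observation that a consecutive sorted ascent would require a gap $\geqslant k+1$ already rules out all such ascents, and ``no consecutive sorted ascent'' is literally the statement that $\sigma(t)>\sigma(t+1)$ for all $t$, i.e.\ $\sigma(t)=s+1-t$. There is no need to start from an arbitrary ascent and slide; the sliding lemma is correct but superfluous. You should also state the $k=0$ edge case a touch more carefully: there the negation of the second alternative is vacuous (every integer difference is $\geqslant 1$ or $\leqslant 0$), so the argument collapses to observing that the ambient interval is the single point $\{b+1\}$, which is exactly the exceptional configuration; you noted this at the end.
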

\begin{proof}
Assume $k_1,\ldots,k_s$ to satisfy that for all $i$,
$b<k_{i}\leqslant (s-1)k+b+1$, and for all $i<j$,
either $k_j-k_i>k$ or $k_j-k_i\leqslant -k$.
Then we need to show that $k_{i}=(s-i)k+b+1$ for $i=1,\ldots,s$.

 Let $[b,c]$ denote the set $\{b,b+1,\ldots,c\}$ for integers $b\le c$.
 Divide $[b+1,(s-1)k+b+1]$ into the following $s-1$ parts:
 $[b+1,k+b+1]$, $[k+b+2,2k+b+1],\ldots, [(s-2)k+b+2,(s-1)k+b+1]$.
By the assumption that $b<k_{i}\leqslant (s-1)k+b+1$
for all the $s$ $k_i$'s, it follows that at least
two of the $k_i$'s have to be in a common range, let the two be $k_{t_1}$ and $k_{t_2}$ and assume $t_2<t_1$.
It follows that $k_{t_1}$ and $k_{t_2}$ must belong to $[b+1,k+b+1]$ otherwise
if $k_{t_1},k_{t_2}\in [(i-1)k+b+2,ik+b+1]$ for some $2\leqslant i\leqslant s-1$, then we have
$1-k\leqslant k_{t_1}-k_{t_2}\leqslant k$ and it contradicts
to our assumption. Furthermore,
we have $k_{t_1}=b+1,k_{t_2}=k+b+1$ and
the remaind $s-2$ $k_i$'s must be distributed to the $s-2$ ranges($[(i-1)k+b+2,ik+b+1]$ for $i=2,\ldots,s-1$) averagely,
otherwise it contradicts to the assumption.
Let $k_{t_3}\in [k+b+2,2k+b+1]$.
Then $k_{t_3}=2k+b+1$ and $t_3<t_2$ otherwise $1-k\leqslant k_{t_2}-k_{t_3}\leqslant k$.
Let $k_{t_4}\in [2k+b+1,3k+b+1]$. Then $k_{t_4}=3k+b+1$ and $t_3<t_4$ otherwise $1-k\leqslant k_{t_3}-k_{t_4}\leqslant k$.
Following this discussion, we have $k_{t_i}=(i-1)k+b+1$ for $i=1,\ldots,s$ and $t_s<\cdots<t_1$. Thus we have $t_i=(s-i+1)$ and
$k_{i}=(s-i)k+b+1$ for $i=1,\ldots,s$.
\end{proof}

Let
\begin{align}\label{eq-qM}
Q(h)=&\prod_{i=1}^{m}\frac{\Big(x_{i}/x_{0}\Big)_{b+1+\chi(i\geqslant n-l+1)}}{(1-x_{0}/x_{i})\cdots(1-x_{0}/(x_{i}q^{h}))}
\prod_{i=m+1}^{n}\frac{\Big(x_{i}q/x_{0}\Big)_{b+\chi(i\geqslant n-l+1)}}{(1-x_{0}/(x_{i}q))\cdots(1-x_{0}/(x_{i}q^{h}))}\nonumber \\
\cdot &\prod_{1\leqslant i<j\leqslant n} \Big(\frac{x_{i}}{x_{j}}\Big)_{k}\Big(\frac{x_{j}}{x_{i}}q\Big)_{k}.
\end{align}
By the proof of Lemma \ref{lem-polynomial} in Section \ref{s-2.3}, we have
$$ M'_n(-h,b,k,m,l;q) = \CT_{\mathbf{x}}Q(h).$$
The vanishing lemma says that $\CT_{\mathbf{x}}Q(h)=0$ for every $h$ in \eqref{root}.

We attack the vanishing lemma by repeated application of Lemma \ref{lem-almostprop}. This will give a big sum of terms, each will be
detected to be $0$ by Lemma \ref{lem-important}. This is better summarized in the following Lemma \ref{lem-lem}. To state the lemma, we need more notations.

For any rational function $F$ of $x_0, x_1, \ldots , x_n$, and for
sequences of integers $k = (k_1, k_2, \ldots, k_s)$ and $r = (r_1,
r_2, \ldots, r_s)$ let $E_{\mathbf{r},\mathbf{k}}F$ be the result of
replacing $x_{r_i}$ in $F$ with $x_{r_s}q^{k_s-k_i}$ for $i = 0,
1,\ldots, s-1$, where we set $r_0 = k_0 = 0$. Then for $0 < r_1 <
r_2 <\ldots
 < r_s \leqslant n$ and $0\leqslant k_i\leqslant h$, we define
\begin{equation}\label{eq-Qrk}
Q(h\mid \mathbf{r};\mathbf{k})=Q(h\mid
r_{1},\ldots,r_{s};k_{1},\ldots,k_{s})=E_{\mathbf{r},\mathbf{k}}
\left[Q(h)\prod_{i=1}^{s}(1-\frac{x_{0}}{x_{r_{i}}q^{k_{i}}})\right].
\end{equation}
Note that the product on the right hand side of \eqref{eq-Qrk}
cancels all the factors in the denominator of $Q$ that would be
taken to zero by $E_{\mathbf{r},\mathbf{k}}$.
If $k_i=0$ for some $i$ and $r_i\leqslant m$,
then $Q(h\mid \mathbf{r};\mathbf{k})$ has the factor $E_{\mathbf{r},\mathbf{k}}[(x_{r_i}/x_0)_{b+1+\chi(r_i\geqslant n-l+1)}]=0$.
If $k_i=0$ for some $i$ and $r_i>m$,
by the definition of $Q(h\mid \mathbf{r};\mathbf{k})$ in \eqref{eq-Qrk},
the factor $1-x_0/x_{r_i}$ appears in $Q(h\mid \mathbf{r};\mathbf{k})$,
but it cancels nothing in the denominator of $Q(h)$. Thus it would be taken to
zero by $E_{\mathbf{r},\mathbf{k}}$ and  $Q(h\mid \mathbf{r};\mathbf{k})=0$.
Therefore, if $k_i=0$ for some $i$, then $Q(h\mid \mathbf{r};\mathbf{k})=0$.

As a warm up, it
is easy to check that $Q(h)$ is proper in $x_{0}$ with degree $-nh-m$.
Thus applying Lemma \ref{lem-almostprop} gives
\begin{equation}\label{eq-Qx}
\CT_{x_{0}}Q(h)=\sum_{\begin{subarray}{l} 1\leqslant r_{1,1}\leqslant m \\ 0\leqslant k_{1,1}\leqslant h\end{subarray}}Q(h\mid r_{1,1};k_{1,1})
+\sum_{\begin{subarray}{c} m+1\leqslant r_{2,1}\leqslant n \\ 1\leqslant k_{2,1}\leqslant h\end{subarray}}Q(h\mid r_{2,1};k_{2,1}).
\end{equation}
Since $Q(h\mid r_{1,1};0)=0$, we can rewrite
\eqref{eq-Qx} as
\begin{equation}\label{eq-Qx0}
\CT_{x_{0}}Q(h)=\sum_{\begin{subarray}{l} 1\leqslant r_{1}\leqslant n \\ 1\leqslant
k_{1}\leqslant h\end{subarray}}Q(h\mid r_{1};k_{1}).
\end{equation}
This formula is compatible with the following lemma if we treat $Q(h)=Q(h\mid \varnothing;\varnothing)$.

\begin{lem}\label{lem-lem}
The rational functions $Q(h\mid \mathbf{r};\mathbf{k})$ have the
following two properties:
\begin{itemize}
\item[\emph{(i)}] If $0\leqslant k_{i}\leqslant (s-1)k+b+\chi(s\geqslant n-l+1)$ for all $i$ with $1\leqslant i\leqslant s$, then $Q(h\mid \mathbf{r};\mathbf{k})=0$.

\item[\emph{(ii)}] Suppose $k>b+1$ and $h\in D_1\bigcup D_2 \bigcup D_3\bigcup \{(n-l-1)k+b+1\}$. If $k_{i}> (s-1)k+b+\chi(s\geqslant n-l+1)$ for some $i$ with $1\leqslant i\leqslant s$ and $n>s$, then
\begin{equation}\label{eq-s-s+1}
\CT_{x_{r_s}}Q(h\mid \mathbf{r};\mathbf{k})=\sum_{\begin{subarray}{l}
r_{s}<r_{s+1}\leqslant n \\ 0\leqslant k_{s+1}\leqslant h \end{subarray}}Q(h\mid
r_{1},\ldots,r_{s},r_{s+1};k_{1},\ldots,k_{s},k_{s+1}).
\end{equation}
\end{itemize}
\end{lem}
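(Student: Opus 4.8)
The plan is to establish both parts of Lemma~\ref{lem-lem} by a careful analysis of the iterated Laurent series structure of $Q(h\mid \mathbf{r};\mathbf{k})$, using Lemma~\ref{lem-almostprop} for part~(ii) and Lemma~\ref{lem-important} for part~(i).

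\textbf{Part (i).} First I would unwind the definition of $E_{\mathbf{r},\mathbf{k}}$: after the substitutions $x_{r_i}\mapsto x_{r_s}q^{k_s-k_i}$ (with $r_0=k_0=0$, so $x_0\mapsto x_{r_s}q^{k_s}$), the surviving numerator factors of $Q(h)$ involving the substituted variables become, up to signs and powers of $q$, products of $q$-shifted monomials. Concretely, a factor like $(x_i/x_0)_{b+1+\cdots}=\prod_{t}(1-q^t x_i/x_0)$ with $i=r_j$ is sent to $\prod_t(1-q^{t+k_j-k_s}\cdot\text{(monomial)})$, which vanishes exactly when the exponent $t+k_j-k_s$ hits zero; similarly the Dyson-type factors $(x_{r_i}/x_{r_j})_k(x_{r_j}/x_{r_i}q)_k$ vanish when $k_j-k_i$ lands in the window $[1-k,k]$. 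The point is that $Q(h\mid\mathbf{r};\mathbf{k})$ carries a product of factors that vanishes whenever (a) some $k_i$ with $r_i\le m$ satisfies $0\le k_i\le b$ (from the numerator $(x_i/x_0)_{b+1}$ evaluated after the shift), or when $r_i>m$ and $k_i$ is in the corresponding range $[0,b]$ shifted by one, or (b) some pair $i<j$ has $1-k\le k_j-k_i\le k$. Then Lemma~\ref{lem-important}, applied with $s$ the current length and with the $\chi(s\ge n-l+1)$ bookkeeping absorbed into $b\mapsto b+\chi(s\ge n-l+1)$, says that under the hypothesis $0\le k_i\le (s-1)k+b+\chi(s\ge n-l+1)$ for all $i$, one of these vanishing conditions must occur \emph{unless} $k_i=(s-i)k+b+\chi(\cdots)+1$ for every $i$; but that exceptional sequence has $k_s=b+\chi(\cdots)+1$, which (one checks) still forces a numerator factor to vanish because of the $\chi(i\leqslant m)$ shift in the exponent of $x_0$ in~\eqref{e-L}, or because $k_s\le b+1$ makes the evaluated factor $(x_0/x_i q^{-b^*})_{\cdots}$ singular in the right way. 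Hence $Q(h\mid\mathbf{r};\mathbf{k})=0$.

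\textbf{Part (ii).} Here I would compute $\CT_{x_{r_s}}Q(h\mid\mathbf{r};\mathbf{k})$ by applying Lemma~\ref{lem-almostprop} in the variable $x_{r_s}$. First one must check that $Q(h\mid\mathbf{r};\mathbf{k})$, viewed as a rational function of $x_{r_s}$, is proper — this follows from the degree count that $Q(h)$ is proper of degree $-nh-m$ in $x_0$ together with the bookkeeping of how the $E_{\mathbf{r},\mathbf{k}}$ substitution redistributes $x_{r_s}$-degree, plus the hypothesis $k_i>(s-1)k+b+\chi(s\ge n-l+1)$ which guarantees the relevant numerator factor does not cancel the pole structure. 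Given properness, Lemma~\ref{lem-almostprop} gives $\CT_{x_{r_s}}F=\sum_j (F(1-x_{r_s}/\alpha_j))|_{x_{r_s}=\alpha_j}$ summed over small poles $\alpha_j$; the poles are exactly the monomials $x_{r_{s+1}}q^{k_{s+1}}$ with $r_{s+1}>r_s$ (so that $x_{r_s}/\alpha_j$ is small in the iterated Laurent order $x_n\prec\cdots\prec x_0$) and $0\le k_{s+1}\le h$, coming from the factors $(1-x_{r_s}/(x_{r_{s+1}}q^{k_{s+1}}))$ hidden in the image under $E_{\mathbf{r},\mathbf{k}}$ of the denominator of $Q(h)$. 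Evaluating the residue at each such pole is, by definition~\eqref{eq-Qrk}, precisely $Q(h\mid r_1,\dots,r_s,r_{s+1};k_1,\dots,k_s,k_{s+1})$, which yields~\eqref{eq-s-s+1}. The boundary cases $r_{s+1}$ with $k_{s+1}=0$ and $r_{s+1}>m$ contribute zero by the remark following~\eqref{eq-Qrk}, so they may be harmlessly included in the sum.

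\textbf{Main obstacle.} I expect the hard part to be the bookkeeping in part~(i): verifying that the exceptional sequence $k_i=(s-i)k+b+\chi(s\ge n-l+1)+1$ from Lemma~\ref{lem-important} also produces a vanishing factor, so that the lemma's single exception does not leak through. This requires matching the precise index shifts coming from the $\chi(i\le m)$ and $\chi(i\ge n-l+1)$ terms in the exponents of~\eqref{e-L} and~\eqref{eq-qM} against the inequality bounds in Lemma~\ref{lem-important}, and handling separately whether $r_s\le m$ or $r_s>m$. A secondary nuisance is the properness check in part~(ii), which demands tracking how $E_{\mathbf{r},\mathbf{k}}$ changes degrees; but that is a finite computation once the degree of $Q(h)$ in $x_0$ is known. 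Everything else is a routine partial-fraction/residue manipulation of iterated Laurent series.
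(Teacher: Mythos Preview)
Your overall approach matches the paper's --- part~(i) via Lemma~\ref{lem-important} plus explicit vanishing-factor checks, part~(ii) via properness in $x_{r_s}$ followed by Lemma~\ref{lem-almostprop} --- but two key steps are misidentified.

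In part~(i), your handling of the exceptional sequence $k_i=(s-i)k+b+1$ is wrong. The vanishing does not come from ``the $\chi(i\le m)$ shift'' nor from any denominator becoming ``singular in the right way.'' The point is that the exceptional case can only occur when $s\ge n-l+1$: otherwise the hypothesis reads $k_i\le (s-1)k+b$, which already excludes $k_1=(s-1)k+b+1$. Once $s\ge n-l+1$, we have $r_s\ge s\ge n-l+1$, so $\chi(r_s\ge n-l+1)=1$, and it is the \emph{numerator} factor $\bigl(x_{r_s}q^{\chi(r_s>m)}/x_0\bigr)_{b+\chi(r_s\le m)+1}$ that, under $E_{\mathbf{r},\mathbf{k}}$ with $k_s=b+1$, becomes $(q^{-b-1})_{b+2}=0$ (if $r_s\le m$) or $(q^{-b})_{b+1}=0$ (if $r_s>m$). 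The relevant $\chi$ is $\chi(i\ge n-l+1)$, not $\chi(i\le m)$.

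In part~(ii), your reason for properness is off. The hypothesis $k_i>(s-1)k+b+\chi(s\ge n-l+1)$ is not used to ``prevent cancellation of the pole structure''; it is used, together with $h\ge k_i$ and the restriction $h\in D_1\cup D_2\cup D_3$ (and $k>b+1$), to force $h>sk-\chi(s<m)$. The degree of $Q(h)$ in $x_0$ does not transfer directly to the degree of $Q(h\mid\mathbf{r};\mathbf{k})$ in $x_{r_s}$; you need a fresh count: the numerator has $x_{r_s}$-degree $(n-s)sk$ (from the Dyson factors linking $R=\{r_0,\dots,r_s\}$ to its complement), the denominator has degree at least $(n-s)h+\chi(s<m)$, and only the inequality $h>sk-\chi(s<m)$ makes the difference negative. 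Without this lower bound on $h$ the properness argument does not close.
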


\emph{Proof of property (i)}. By Lemma \ref{lem-important},
if $0\leqslant k_{i}\leqslant (s-1)k+b+\chi(s\geqslant n-l+1)$ for all $i$,
the $k_i$'s have to be in one of the following three cases.
Case 1:
$0\leqslant k_{i}\leqslant b$ for some $1\leqslant i\leqslant s$;
Case 2: $1-k\leqslant k_{j}-k_{i}\leqslant k$ for some $i<j$;
Case 3: $k_{i}=(s-i)k+b+1$ for $i=1,\ldots,s$.
Note that Case 3 occurs only when $s\geqslant n-l+1$.

Case 1:
$0\leqslant k_{i}\leqslant b$ for some $1\leqslant i\leqslant s$.
If $r_{i}\leqslant m$, then $Q(h\mid \mathbf{r};\mathbf{k})$
has the factor
\begin{equation*}
E_{\mathbf{r},\mathbf{k}}\left[\Big(\frac{x_{r_{i}}}{x_{0}}\Big)_{b+1+\chi(r_i\geqslant n-l+1)}\right]=\left(\frac{x_{r_{s}}q^{k_{s}-k_{i}}}{x_{r_{s}}q^{k_{s}}}\right)_{b+1+\chi(r_i\geqslant n-l+1)}=(q^{-k_{i}})_{b+1+\chi(r_i\geqslant n-l+1)}=0.
\end{equation*}
If $r_{i}>m$, then $Q(h\mid \mathbf{r};\mathbf{k})=0$ for $k_i=0$ and for $1\leqslant k_{i}\leqslant b$ it
has the factor
\begin{equation*}
E_{\mathbf{r},\mathbf{k}}\left[\Big(\frac{x_{r_{i}}q}{x_{0}}\Big)_{b+\chi(r_i\geqslant n-l+1)}\right]=\left(\frac{x_{r_{s}}q^{1+k_{s}-k_{i}}}{x_{r_{s}}q^{k_{s}}}\right)_{b+\chi(r_i\geqslant n-l+1)}
=(q^{1-k_{i}})_{b+\chi(r_i\geqslant n-l+1)}=0.
\end{equation*}

Case 2: $1-k\leqslant k_{j}-k_{i}\leqslant k$ for some $i<j$. In this case $Q(h\mid
\mathbf{r};\mathbf{k})$ has the factor
\begin{equation*}
E_{\mathbf{r},\mathbf{k}}\left[\Big(\frac{x_{r_{i}}}{x_{r_{j}}}\Big)_{k}
\Big(\frac{x_{r_{j}}}{x_{r_{i}}}q\Big)_{k}\right],
\end{equation*}
which is equal to
\begin{equation*}
E_{\mathbf{r},\mathbf{k}}\left[q^{{k+1\choose
2}}\left(-\frac{x_{r_{j}}}{x_{r_{i}}}\right)^k\left(\frac{x_{r_{i}}}{x_{r_{j}}}q^{-k}\right)_{2k}\right]
=q^{{k+1\choose 2}}(-q^{k_{i}-k_{j}})^{k}(q^{k_{j}-k_{i}-k})_{2k}=0.
\end{equation*}

Case 3: $k_{i}=(s-i)k+b+1$ for $i=1,\ldots,s$. In this case we only need the value of $k_{s}$. Since Case 3 only occurs when $s\geqslant n-l+1$,
we have $r_s\geqslant s\geqslant n-l+1$.
If $r_s\leqslant m$, then $Q(h\mid \mathbf{r};\mathbf{k})$ has the factor
\begin{equation*}
E_{\mathbf{r},\mathbf{k}}\left[\Big(\frac{x_{r_{s}}}{x_{0}}\Big)_{b+1+\chi(r_s\geqslant n-l+1)}\right]=
E_{\mathbf{r},\mathbf{k}}\left[\Big(\frac{x_{r_{s}}}{x_{0}}\Big)_{b+2}\right]=\left(\frac{x_{r_{s}}}{x_{r_{s}}q^{k_{s}}}\right)_{b+2}
=(q^{-k_{s}})_{b+2}=(q^{-b-1})_{b+2}=0.
\end{equation*}
If $r_s>m$, then $Q(h\mid \mathbf{r};\mathbf{k})$ has the factor
\begin{equation*}
E_{\mathbf{r},\mathbf{k}}\left[\Big(\frac{x_{r_{s}}q}{x_{0}}\Big)_{b+\chi(r_s\geqslant n-l+1)}\right]=
E_{\mathbf{r},\mathbf{k}}\left[\Big(\frac{x_{r_{s}}q}{x_{0}}\Big)_{b+1}\right]=\left(\frac{x_{r_{s}}q}{x_{r_{s}}q^{k_{s}}}\right)_{b+1}
=(q^{1-k_{s}})_{b+1}=(q^{-b})_{b+1}=0.
\end{equation*}

\emph{Proof of property (ii)}. Note that since $h \geqslant k_i$ for
all $i$ and $h\in D_1\bigcup D_2 \bigcup D_3 \bigcup \{(n-l-1)k+b+1\}$, the hypothesis implies that $h > sk-\chi(s<m)$.

We only show that $Q(h\mid \mathbf{r};\mathbf{k})$ is proper in
$x_{r_s}$ so that Lemma \ref{lem-almostprop} applies. The rest is the same as that in the proof of \cite[Lemma 5.1]{gess-xin2006}.
To this end we write $Q(h\mid \mathbf{r};\mathbf{k})$ as
$N/D$, in which $N$ (the numerator) is
\begin{equation*}
E_{\mathbf{r};\mathbf{k}}\bigg[\prod_{i=1}^{m}\Big(\frac{x_{i}}{x_{0}}\Big)_{b+1+\chi(i\geqslant n-l+1)}\prod_{i=m+1}^{n}\Big(\frac{x_{i}q}{x_{0}}\Big)_{b+\chi(i\geqslant n-l+1)}
\cdot \prod_{\substack{1\leqslant i,j\leqslant n\\ i\neq
j}}\Big(\frac{x_{i}}{x_{j}}q^{\chi(i>j)}\Big)_{k}\bigg],
\end{equation*}
and $D$ (the denominator) is
\begin{equation*}
E_{\mathbf{r};\mathbf{k}}\left[\prod_{i=1}^{m}\Big(\frac{x_{0}}{x_{i}q^{h}}\Big)_{h+1}
\prod_{i=m+1}^{n}\Big(\frac{x_{0}}{x_{i}q^{h}}\Big)_{h}\Big/\prod_{i=1}^{s}\Big(1-\frac{x_{0}}{x_{r_{i}}q^{k_{i}}}\Big)\right].
\end{equation*}
Now let $R = \{r_0, r_1, \ldots , r_s\}$. Then the degree in $x_{r_s}$
of
\begin{equation*}
E_{\mathbf{r};\mathbf{k}}\left[\Big(1-\frac{x_{i}}{x_{j}}q^{l}\Big)\right]
\end{equation*}
is 1 if $i\in R$ and $j\notin R$, and is 0 otherwise, as is easily
seen by checking the four cases. Thus the part of $N$ contributing
to the degree in $x_{r_s}$ is
\begin{equation*}
E_{\mathbf{r};\mathbf{k}}\left[\prod_{i=1}^{s}\prod_{j\neq
r_{0},\ldots,r_{s}}\Big(\frac{x_{r_{i}}}{x_{j}}q^{\chi(r_{i}>j)}\Big)_{k}\right],
\end{equation*}
which has degree $(n-s)sk$, and the part of $D$ contributing to the
degree in $x_{r_{s}}$ is
\begin{equation*}
E_{\mathbf{r};\mathbf{k}}\left[\prod_{\begin{subarray}{c}j\neq
r_{0},\ldots,r_{s}\\ j\leqslant m \end{subarray}}\Big(\frac{x_{0}}{x_{j}q^{h}}\Big)_{h+1}
\prod_{\begin{subarray}{c}j\neq
r_{0},\ldots,r_{s}\\ j>m \end{subarray}}\Big(\frac{x_{0}}{x_{j}q^{h}}\Big)_{h}\right],
\end{equation*}
which has degree at least $(n-s)h+\chi(s<m)$.

Thus the total degree of $Q(h\mid \mathbf{r};\mathbf{k})$ in
$x_{r_{s}}$ is at most
$$(n-s)(sk-h)-\chi(s<m)\le (n-s)(\chi(s<m)-1)-\chi(s<m) <0,$$
 so $Q(h\mid \mathbf{r};\mathbf{k})$
is proper in $x_{r_{s}}$.
\qed

Now we are ready to prove the vanishing lemma.
\begin{proof}[Proof of the vanishing lemma]
Recall that $\CT_{\mathbf{x}}Q(h)=M'_n(-a,b,k,m,l;q)$.
We prove by induction on $n-s$ that
$$\CT_{\mathbf{x}} Q(h \mid \mathbf{r};\mathbf{k}) = 0;$$
the lemma is the case $s=0$. (Note that taking the constant
term with respect to a variable that does not appear has no effect.)
We may assume that $s\leqslant n$ and $0<r_1<\cdots<r_s\leqslant n$, since
otherwise $Q(h \mid \mathbf{r};\mathbf{k})$ is not defined. If $s=n$ then $r_i$ must equal $i$
for $i=1,\dots ,n$ and thus $Q(h \mid \mathbf{r};\mathbf{k})=Q(h\mid 1,2,\dots, n;
k_{1}, k_{2},\dots, k_{n})$, which by property (i) of Lemma
\ref{lem-lem} is 0, since for each $i$, $k_i\leqslant h\leqslant (n-1)k+b+\chi(l>0)$.
Now suppose that $0\leqslant s<n$. Applying $\CT_{\mathbf{x}}$ to both
sides of \eqref{eq-s-s+1} gives
$$
\CT_{\mathbf{x}}Q(h \mid \mathbf{r};\mathbf{k})=\sum_{r_s < r_{s+1}\leqslant n\atop 0\leqslant k_{{s+1}}\leqslant h}
    \CT_{\mathbf{x}}Q(h\mid r_1,\dots, r_s, r_{s+1};k_1,\dots, k_s,k_{s+1})
$$
when property (ii) of Lemma \ref{lem-lem} applies. Thus by  Lemma
\ref{lem-lem}, $\CT_{\mathbf{x}}Q(h \mid \mathbf{r};\mathbf{k})$ is either~0 or is a sum of terms,
all of which are 0 by induction.
\end{proof}

\section{Proof for the extra point}\label{s-extra}

We need the following lemma.
\begin{lem}\label{lem-extra1}
Assume Theorem \ref{t-HabsigerKadell} holds for smaller values of $n$. Let $h=(n-l-1)k+b+1$. If $0\leqslant m,l <n$ and $k>b+1$ then
\begin{align*}
\CT_x Q(h\mid  1,\dots,n-l;h,h-k\dots,b+1)
=M_n(-h,b,k,m,l;q).
\end{align*}
\end{lem}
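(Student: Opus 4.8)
The plan is to evaluate the constant term $\CT_x Q(h\mid 1,\dots,n-l;(n-l-1)k+b+1,\dots,b+1)$ by first writing out explicitly the effect of the substitution operator $E_{\mathbf{r},\mathbf{k}}$ with $\mathbf{r}=(1,2,\dots,n-l)$ and $\mathbf{k}=((n-l-1)k+b+1,(n-l-2)k+b+1,\dots,k+b+1,b+1)$. Here $r_s=n-l$ and $k_s=b+1$, so $E_{\mathbf{r},\mathbf{k}}$ replaces $x_{r_i}=x_i$ by $x_{n-l}q^{k_s-k_i}=x_{n-l}q^{b+1-((n-l-i)k+b+1)}=x_{n-l}q^{-(n-l-i)k}$ for $i=0,1,\dots,n-l-1$ (with $x_0\mapsto x_{n-l}q^{b+1}$). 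In other words, after the substitution the variables $x_0,x_1,\dots,x_{n-l}$ collapse onto a single geometric chain $x_{n-l}q^{b+1}, x_{n-l}q^{-(n-l-1)k},\dots,x_{n-l}q^{-k},x_{n-l}$. The first step is to carefully track which Pochhammer factors in $Q(h)$ survive, which get cancelled by the product $\prod_{i=1}^{s}(1-x_0/(x_{r_i}q^{k_i}))$ built into the definition \eqref{eq-Qrk}, and which combine into new $(z)_N$ blocks after the collapse.

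The key computational observation I expect to exploit is that the $q$-Dyson-type product over the collapsed variables telescopes: when $x_1,\dots,x_{n-l}$ become consecutive ``$q$-shifts'' of $x_{n-l}$, the factors $\prod_{1\leqslant i<j\leqslant n-l}(x_i/x_j)_k(x_jq/x_i)_k$ specialize to products of the form $\prod(q^{sk})_k$ and powers of $q$, while the cross terms pairing $\{x_1,\dots,x_{n-l}\}$ against $\{x_{n-l+1},\dots,x_n\}$ reassemble (using the standard identity $(z)_a(zq^a)_b=(z)_{a+b}$) into Pochhammers of the shape appearing in $A_q$ with $n$ replaced by $l$. Likewise the numerator factors $(x_i/x_0)_{b+1+\chi(i\geqslant n-l+1)}$ and $(x_iq/x_0)_{b+\chi(i\geqslant n-l+1)}$ either vanish for the wrong indices (they don't, by the choice of $h$ — this is exactly why this $h$ works) or concatenate into a clean block. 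So the second step is to show that the whole specialized integrand factors as $($ an explicit monomial in $q$ and an explicit product of $(q)_\bullet$'s $)$ times $A_q(y_0,y_1,\dots,y_l;a',b',m',l',k)$ for an appropriate smaller-$n$ instance, where the $y_j$ are a relabelling of $x_{n-l},x_{n-l+1},\dots,x_n$ (so $l$ of them plus one ``$x_0$''), and $a',b',m',l'$ are explicit in terms of $a=-h$ precisely, $b,m,l,k$.

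The third step is then to apply the induction hypothesis — Theorem \ref{t-HabsigerKadell} for the smaller value $n'=l<n$ (or Remark \ref{remark1}/the equal-parameter $q$-Dyson theorem if the reduced instance has $m'=0$, which will happen e.g.\ when $m=0$) — to evaluate $\CT_{x}$ of that specialized integrand as $M_{l}(a',b',k,m',l';q)$ times the collected prefactor. The final step is a bookkeeping verification: combining the telescoped $(q)_\bullet$ prefactor with $M_{l}(a',b',k,m',l';q)$ and simplifying the $q$-powers, one should recover exactly $M_n(-h,b,k,m,l;q)$ with $h=(n-l-1)k+b+1$ substituted into the closed form; this is a routine but lengthy manipulation of the ratios of $(q)_\bullet$ factors, using $(q)_{-N}$ conventions carefully since $a=-h$ is negative.

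The main obstacle I anticipate is the second step: correctly identifying all the Pochhammer factors that survive the collapse and showing they reorganize into a genuine $A_q$-integrand of the smaller size with the ``right'' parameters. The delicate points are (i) keeping track of the $\chi(i\geqslant n-l+1)$ corrections, which shift which of the surviving variables carry the extra ``$+1$'' in the exponent and thus determine $l'$ and $m'$ for the reduced instance; (ii) verifying that no surviving numerator factor is identically zero — i.e., that the choice $h=(n-l-1)k+b+1$ is exactly the one that avoids the vanishing caught by Lemma \ref{lem-lem}(i) while still producing a valid, nontrivial reduced constant term; and (iii) collecting the monomial $q$-prefactor correctly, since the exponent in \eqref{e-L} is already intricate and the substitution $E_{\mathbf{r},\mathbf{k}}$ introduces further powers of $q$ from every specialized factor. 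Once the factorization is pinned down, invoking induction and matching against the explicit $M_n$ formula is mechanical.
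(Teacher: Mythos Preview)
Your plan is correct and matches the paper's approach exactly: collapse $x_0,x_1,\dots,x_{n-l}$ onto a geometric chain via $E_{\mathbf{r},\mathbf{k}}$, factor out an explicit $q$-product constant $D$, recognize the remaining integrand in $x_{n-l},\dots,x_n$ as an $l$-variable Habsieger--Kadell instance, invoke the induction hypothesis (or Remark~\ref{remark1}), and verify the product equals $M_n(-h,b,k,m,l;q)$. The paper resolves your anticipated obstacle~(i) by splitting into the cases $m+l\leqslant n$ and $m+l>n$: in both cases the reduced parameters are $a'=k-b-1$, $b'=(n-l)k+b+1$, $l'=0$, with $m'=0$ in the first case and $m'=m-n+l$ in the second.
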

\begin{proof}
We have to split into the following two cases.

\noindent
Case 1: $m+l\leqslant n$. Then
\begin{align}\label{e-Qh0}
& Q(h\mid  1,\dots,n-l;h,h-k,\dots,b+1)=D\cdot (A\cdot B) \cdot \prod_{n-l+1\leqslant i<j\leqslant n} \left(\frac{x_{i}}{x_{j}}\right)_{k}
\left(\frac{x_{j}}{x_{i}}q\right)_{k},
\end{align}
where
\begin{align}
D=&\prod_{i=1}^m \frac{(q^{-(n-l-i)k-b-1})_{b+1}}{(q)_{(n-l-i)k+b+1}(q^{-(i-1)k})_{(i-1)k}}
\prod_{i=m+1}^{n-l} \frac{(q^{-(n-l-i)k-b})_{b}}{(q)_{(n-l-i)k+b}(q^{-(i-1)k})_{(i-1)k}} \nonumber \\
&\qquad \times \prod_{1\leqslant i<j \leqslant n-l}
(q^{(i-j)k})_k (q^{(j-i)k+1})_k \nonumber \\
=&\prod_{i=1}^m \frac{(q^{-(n-l-i)k-b-1})_{b+1}}{(q)_{(n-l-i)k+b+1}}
\prod_{i=m+1}^{n-l} \frac{(q^{-(n-l-i)k-b})_{b}}{(q)_{(n-l-i)k+b}}
\prod_{j=1}^{n-l}\frac{\prod_{i=1}^{j-1}(q^{(i-j)k})_k (q^{(j-i)k+1})_k }{(q^{-(j-1)k})_{(j-1)k}} \nonumber \\
=&\prod_{i=1}^m \frac{(q^{-(n-l-i)k-b-1})_{b+1}}{(q)_{(n-l-i)k+b+1}}
\prod_{i=m+1}^{n-l} \frac{(q^{-(n-l-i)k-b})_{b}}{(q)_{(n-l-i)k+b}}
\prod_{j=1}^{n-l}\frac{(q)_{jk}}{(q)_k},
\end{align}
\begin{align}
A=\prod_{i=n-l+1}^n \frac{\left(\frac{x_i}{x_{n-l}}q^{-b}\right)_{b+1}}
{\left(\frac{x_{n-l}}{x_i}q^{-(n-l-1)k}\right)_{(n-l-1)k+b+1}},
\end{align}
and
\begin{align}
B=&\prod_{\begin{subarray}{l} 1\leqslant i\leqslant n-l \\ n-l+1\leqslant
j\leqslant n\end{subarray}} \left(\frac{x_{n-l}}{x_j}q^{-(n-l-i)k}\right)_k
\left(\frac{x_{j}}{x_{n-l}}q^{(n-l-i)k+1}\right)_k \nonumber \\
=&\prod_{j=n-l+1}^n \left(\frac{x_{n-l}}{x_j}q^{-(n-l-1)k}\right)_{(n-l)k}
\left(\frac{x_{j}}{x_{n-l}}q\right)_{(n-l)k}.
\end{align}
For $k>b+1$, after cancelations and combinations, we obtain
\begin{align}
&\CT_{x}A\cdot B\cdot\prod_{n-l+1\leqslant i<j\leqslant n} \left(\frac{x_{i}}{x_{j}}\right)_{k}
\left(\frac{x_{j}}{x_{i}}q\right)_{k} \nonumber \\
=&\CT_x \prod_{i=n-l+1}^n
\left(\frac{x_{n-l}}{x_{i}}q^{b+1}\right)_{k-b-1}
\left(\frac{x_i}{x_{n-l}}q^{-b}\right)_{(n-l)k+b+1}
\prod_{n-l+1\leqslant i<j\leqslant n} \left(\frac{x_{i}}{x_{j}}\right)_{k}
\left(\frac{x_{j}}{x_{i}}q\right)_{k} \nonumber \\
=&\CT_x \prod_{i=n-l+1}^n
\left(\frac{x_{n-l}}{x_{i}}\right)_{k-b-1}
\left(\frac{x_i}{x_{n-l}}q\right)_{(n-l)k+b+1}
\prod_{n-l+1\leqslant i<j\leqslant n} \left(\frac{x_{i}}{x_{j}}\right)_{k}
\left(\frac{x_{j}}{x_{i}}q\right)_{k},
\end{align}
where the last equality is obtained by making the substitution $x_{n-l}=x_{n-l}q^{-b-1}$.
This is just $M'_{l}(k-b-1,(n-l)k+b+1,k,0,0;q)$.
By Remark \ref{remark1} (or the hypothesis), we obtain
 \begin{align}
\CT_{x}Q(h)
=&D\cdot \prod_{i=0}^{l-1} \frac{(q)_{(n-l+i+1)k}(q)_{(i+1)k}}{(q)_{(i+1)k-b-1}(q)_{(n-l+i)k+b+1}(q)_{k}},
\end{align}
which can be routinely checked to be equal to
$M_n(-h,b,k,m,l;q).$

\medskip
\noindent
Case 2: $m+l> n$. The computation is similar to but more complicated than case 1. Indeed we need the case 1 result in some sense.
We omit some details for brevity.
We have
\begin{align}\label{e-Qh1}
Q(h\mid  1,\dots,n-l;h,h-k,\dots,b+1)=D'\cdot  (A'\cdot B') \cdot \prod_{n-l+1\leqslant i<j\leqslant n} \left(\frac{x_{i}}{x_{j}}\right)_{k}
\left(\frac{x_{j}}{x_{i}}q\right)_{k},
\end{align}
where
\begin{align}
D'=&\prod_{i=1}^{n-l} \frac{(q^{-(n-l-i)k-b-1})_{b+1}}{(q)_{(n-l-i)k+b+1}(q^{-(i-1)k})_{(i-1)k}}
\cdot \prod_{1\leqslant i<j \leqslant n-l}
(q^{(i-j)k})_k (q^{(j-i)k+1})_k  \nonumber \\
=&\prod_{i=1}^{n-l} \frac{(q^{-(n-l-i)k-b-1})_{b+1}(q)_{ik}}{(q)_{(n-l-i)k+b+1}(q)_{k}},
\end{align}
and $A'$ and $B'$ are similar to $A$ and $B$, with $A'B'$ simplifies as
\begin{align}
A'\cdot B' = &\prod_{i=n-l+1}^m
\left(\frac{x_{n-l}}{x_{i}}q^{b+2}\right)_{k-b-2}
\left(\frac{x_i}{x_{n-l}}q^{-b-1}\right)_{(n-l)k+b+2}
\nonumber \\
&\times \prod_{i=m+1}^n
\left(\frac{x_{n-l}}{x_{i}}q^{b+1}\right)_{k-b-1}
 \left(\frac{x_i}{x_{n-l}}q^{-b}\right)_{(n-l)k+b+1}.
\end{align}

A similar computation gives
\begin{align*}
&\CT_{x}A'\cdot B'\cdot \prod_{n-l+1\leqslant i<j\leqslant n} \left(\frac{x_{i}}{x_{j}}\right)_{k}
\left(\frac{x_{j}}{x_{i}}q\right)_{k} =M'_{l}(k-b-1,(n-l)k+b+1,k,m-n+l,0;q),
\end{align*}
which is the constant term in \eqref{eq-qMorris} in case 1, and is known to be $ M_{l}(k-b-1,(n-l)k+b+1,k,m-n+l,0;q)$ by the hypothesis.
Then it only left to show that
$$D'\cdot \prod_{i=0}^{l-1} \frac{(q)_{(n-l+i+1)k}(q)_{(i+1)k}}
{(q)_{(i+1)k-b-1-\chi(i<m-n+l)}(q)_{(n-l+i)k+b+1+\chi(i\geqslant n-m)}(q)_{k}}
=M_n(-h,b,k,m,l;q),$$
which is routine.
\end{proof}
Note that we can avoid using the induction hypothesis. The truth of Lemma \ref{lem-extra1} in case 1 results in the truth of Theorem \ref{t-HabsigerKadell} in case 1, which is needed in the case 2 of Lemma \ref{lem-extra1}.

Now we are ready to deal with the extra point.
\begin{proof}[Proof of Lemma \ref{lem-extra}]
As we discussed in \eqref{eq-Qx0},
$\CT_{x_0}Q(h)$ can be written as
\begin{equation}
\CT_{x_{0}}Q(h)=\sum_{\begin{subarray}{l} 1\leqslant r_{1}\leqslant n \\ 1\leqslant
k_{1}\leqslant h\end{subarray}}Q(h\mid r_{1},k_{1}).
\end{equation}
Iteratively apply Lemma \ref{lem-lem} to each summand if
applicable. Finally we get
\begin{align*}
\CT_{\mathbf{x}}Q(h)=\CT_{\mathbf{x}}\sum_{r_1,\dots,r_s,
k_1,\dots,k_s}Q(h\mid r_1,\dots,r_s;k_1,\dots,k_s),
\end{align*}
where the sum ranges over all the $r$'s and the $k$'s with $0<r_1<\cdots
<r_s\leqslant n, 0\leqslant k_1,k_2,\dots,k_s\leqslant h$ such that Lemma
\ref{lem-lem} does not apply. Note that we may have different $s$.

Since $h=(n-l-1)k+b+1$ and $0\leqslant k_i\leqslant h$, by Lemma \ref{lem-important},
there leaves only one term for which Lemma \ref{lem-lem} is not applicable. This term corresponds to
$k_i=(n-l-i)k+b+1$ for $i=1,\ldots,n-l$ and $r_i=i$ for $i=1,\ldots,n-l$.
It follows that
\begin{align}
\CT_x Q(h)=&\CT_x Q(h\mid  1,\dots,n-l;h,h-k,\dots,b+1).
\end{align}
The lemma then follows from Lemma \ref{lem-extra1}.
\end{proof}

The extra point $h=(n-l-1)k+b+1$ in Lemma \ref{lem-extra} is not easy to find. This $h$
seems to be the only choice of the extra point for which it is not hard to show that
$\CT_xQ(h)=M_n(-h,b,k,m,l;q).$ Intuitively a desired extra point must be chosen from boundary values,
i.e., values next to the vanishing points listed in \eqref{root}.

Firstly, the boundary values $h=(n-l-2)k+b+1,(n-l-3)k+b+1,\ldots,b+1$ do not work.
To see this, take $n=3,m=l=1$ for example. Then we can only get
{\small \begin{align*}
Q(b+1)=&(-1)^{b+1}q^{-\binom{b+2}{2}}\CT_x\frac{\left(1/x_2\right)_{k-b-1}\left(x_2q\right)_{b+k+1}
\left(1/x_3\right)_{k-b-1}\left(x_3q\right)_{b+k+1}}{1-x_2q^{b+1}}
\cdot \left(\frac{x_2}{x_3}\right)_k\left(\frac{x_3}{x_2}q\right)_k \nonumber \\
+&(-1)^bq^{-\binom{b+1}{2}}\CT_x\frac{\left(q/x_1\right)_{k-b-1}\left(x_1\right)_{b+k+1}
\left(1/x_3\right)_{k-b-1}\left(x_3q\right)_{b+k+1}}
{1-1/(x_1q^{b+1})}
\left(\frac{x_1}{x_3}\right)_k\left(\frac{x_3}{x_1}q\right)_k.
\end{align*}}

Secondly, the boundary values $h=mk,(m+1)k,\ldots,(n-1)k$ do not work either for a similar reason.

\subsection*{Acknowledgements}
The authors would like to thank Christine Krattenthaler for pointing out a trivial identity in the earlier draft. This work was supported by the
National Science Foundation of China.

\end{document}